\UseRawInputEncoding
\documentclass[a4paper,10pt]{article}
\overfullrule =0pt
\usepackage{graphicx}
\usepackage{subfigure}
\usepackage{epstopdf}
\usepackage{booktabs}
\usepackage{threeparttable}
\usepackage{amssymb, amsmath, amsthm, epsfig}
\numberwithin{equation}{section}
\usepackage{latexsym, enumerate}
\usepackage{eepic}
\usepackage{epic}
\usepackage{color}

\usepackage{amsmath,textcomp, amssymb, mathrsfs, bm, amsthm, amsfonts}
\usepackage{algorithm}
\usepackage{algpseudocode}
\allowdisplaybreaks[4]
\usepackage{ifpdf}

\usepackage{chngcntr}

\usepackage{dsfont}
\usepackage{multirow}
\usepackage{bm}
\usepackage{caption}

\topmargin -0.5in
\textheight 9.0in
\textwidth 6.5in
\oddsidemargin 0.0in
\evensidemargin 0.0in

\theoremstyle{plain}
\newtheorem{lem}{Lemma}[section]
\newtheorem{thm}[lem]{Theorem}
\newtheorem{cor}[lem]{Corollary}

\newtheorem{property}{Property}
\theoremstyle{definition}
\newtheorem{defn}{Definition}[section]
\newtheorem{assum}{Assumption}[section]

\theoremstyle{remark}
\newtheorem{rem}{Remark}[section]

\renewcommand{\thefigure}{\thesection.\arabic{figure}}

\begin{document}
\renewcommand{\figurename}{Figure}
\renewcommand{\thesubfigure}{(\alph{subfigure})}
\renewcommand{\thesubtable}{(\alph{subtable})}
\makeatletter
\renewcommand{\p@subfigure}{\thefigure~}

\makeatother
\title{\large\bf A Hadamard fractioal total variation-Gaussian (HFTG) prior for Bayesian inverse problems}
\author{
Li-Li Wang\thanks
{School of Mathematics, Hunan University, Changsha 410082, China.
Email: lilywang@hnu.edu.cn}
\and
Ming-Hui Ding\thanks
{School of Mathematics, Hunan University, Changsha 410082, China.
Email: minghuiding@hnu.edu.cn}
\and
Guang-Hui Zheng\thanks
{School of Mathematics, Hunan University, Changsha 410082, China. Email: zhenggh2012@hnu.edu.cn (Corresponding author)}
}
\date{}
\maketitle

\begin{center}{\bf ABSTRACT}
\end{center}\smallskip
This paper studies the infinite-dimensional Bayesian inference method with Hadamard fractional total variation-Gaussian (HFTG) prior for solving inverse problems. First, Hadamard fractional Sobolev space is established and proved to be a separable Banach space under some mild conditions. Afterwards, the HFTG prior is constructed in this separable fractional space, and the proposed novel hybrid prior not only captures the texture details of the region and avoids step effects, but also provides a complete theoretical analysis in the infinite dimensional Bayesian inversion. Based on the HFTG prior, the well-posedness and finite-dimensional approximation of the posterior measure of the Bayesian inverse problem are given, and samples are extracted from the posterior distribution using the standard pCN algorithm. Finally, numerical results under different models indicate that the Bayesian inference method with HFTG prior is effective and accurate.

\smallskip
{\bf keywords}: Bayesian inference; Hadamard fractional total variation; hybrid prior; pCN algorithm

\section{Introduction}\label{sec1}
Motivated by significant scientific and industrial applications, the research in theories and computational methods of inverse problems has undergone tremendous growth in the last decades. Consider the inverse problem of find $u$ from $y$, where $u$ and $y$ are related by
\begin{equation}\label{bayesian model}
y=G(u)+\eta,
\end{equation}
here, $G: X \to \mathbb{R}^{m}$ is a measurable mapping known as forward operator, $X$ is a separable Hilbert space with inner product $\langle\cdot ,\cdot \rangle_{ X }$, $u\in X$ is the unknown function, $y\in\mathbb{R}^{m}$ is the finite-dimensional observed data, and $\eta$ is an $m$-dimensional Gaussian noise with zero mean and covariance operator as $\Sigma$, namely, $\eta\thicksim \mathcal{N}(0,\Sigma)$.

Bayesian inference methods have attracted much attention in various applications of inverse problems since they constitute a complete probabilistic description of the inverse problem and therefore provide a natural framework for quantifying its uncertainty \cite{Wang2004, Kaipio2005}.

This framework involves characterizing the posterior distribution of parameters in terms of given a prior distribution and observed data, together with a forward model connecting the space of parameters to the data space. Assuming the prior measure of $u$ is $\mu_{pr}$, the posterior distribution on $u$ is given by measure $\mu^y$ satisfying
\begin{equation}\label{HybridPriorMeasure}
\frac{d\mu^{y}}{d\mu_{pr}}(u) \propto\exp(-\Phi(u)),
\end{equation}
where the left hand side of (20) is the Radon-Nikodym derivative of the posterior
distribution $\mu^y$ with respect to the prior $\mu_0$ and
\begin{equation}\label{potential}
\Phi(u):=\frac12\big\|G(u)-y\big\|^2_\Sigma=\frac12\big\|\Sigma^{-1/2}(G(u)-y)\big\|_2^2,
\end{equation}
is potential function in Bayesian theory which often referred to as the data fidelity term in deterministic inverse problems. Formula \eqref{HybridPriorMeasure} holds in infinite dimensions, in practical term, this means that posterior expectations can be found by reweighting prior expectations by the right-hand side of Eq \eqref{HybridPriorMeasure}.

 A prior selection plays a key role in Bayesian inference methods for solving inverse problems, and a suitable prior distribution can significantly improve the inference results. In the existing work, Gaussian prior is the most commonly used prior distribution, such as \cite{Dashti2013,Stuart2015,Stuart2010}. However, the Gaussian prior is less effective for sharp jumps or discontinuities in the inversion, in order to overcome this shortcoming, a hybrid prior of the Bayesian inference method is proposed. The idea of this hybrid prior is described as follows: assume the Gaussian measure is $\mu_0$, and the prior measure $\mu_{pr}$ is absolutely continuous with respect to the $\mu_0$, i.e.,
 \begin{equation}\label{PriorMeasure}
\frac{d\mu_{pr}}{d\mu_{0}}(u) \propto\exp(-R(u)),
\end{equation}
where $R(u)$ represents additional prior information (or regularization) on $u$. It is easy to see that, under this assumption, the Radon-Nikodym derivative of $\mu^y$ in (1.2) satisfying
\begin{equation}\label{hybridR-N}
\frac{d\mu^y}{d\mu_0}(u) \propto \exp(-\Phi(u)-R(u)),
\end{equation}
which is interpreted as the Bayesian formula with hybrid prior, and also returns to the conventional formulation with Gaussian priors. In \cite{Z.Yao2016}, the hybrid total variation-Gaussian (TG) prior is proposed for the first time, which selects a TV regularization term  as the additional regularization term $R$ to detect edges. Nevertheless, the TG prior has the similar disadvantage as TV regularization method in \cite{Vogel2002}, which will cause issues such as, stair effects, texture loss and over smooth etc.

Recently, the fractional total variation regularization method is widely used in imaging inverse problems, for examples, image denoising, deblurring (or deconvolution), registration, super-resolution \cite{chan2019,yao2020,zhang2011,Zhang2012,DL.Chen2013,Williams2016,Ruiz2009,Zhang2015,Chen2015,Zhou2017,Ren2013}. They can ease the conflict between staircase elimination and edge preservation by choosing the order of derivative properly compared with TV. Moreover, the fractional derivative operator has a ¡°non-local¡± behavior because the fractional derivative at a point depends upon the characteristics of the entire function and not just the values in the vicinity of the point \cite{Pdlubny1999,Kilbas2006,Samko1993}, which is beneficial to improve the performance of texture preservation. The numerical results in literatures \cite{D.Chen2013,Ren2013,Zhang2015,Chen2015} have demonstrated that the fractional derivative performs well for eliminating staircase effect and preserving textures. Motivated by these works, we propose to consider the fractional total variation-Gaussian (FTG) prior, which takes the fractional total variation (FTV) regularization term as the additional prior information $R(u)$. This hybrid prior not only allows for flexible recovery of texture and geometric patterns for various imaging inverse problems, but also uses the Gaussian distribution as a reference measure to ensure that the resulting prior converge to a well-defined probability measure in the function space in the limit of infinite dimensionality. Among the available works, the studies on the FTV terms are all based on the classical Riemann-Liouville or Gr\"{u}nwald-Letnikov fractional derivative, while in this article, we investigate the FTV term based on the Hadamard fractional derivative, more definitions and details are present in Section 2.

 In this work, we investigates applying the Bayesian inference method with the  Hadamard fractional total variation-Gaussian (HFTG) prior for solving the inverse problem of (1.1). We first give the basic definitions and properties of Hadamard fractional derivative, the HFTV term and fractional Sobolev space. Then, the well-posedness and finite-dimensional approximation of the posterior distribution which is derived from the Bayesian method with HFTG prior, is proved theoretically in infinite dimension setting. Finally, it is verified that our proposed prior is robust and valid by reconstructing the image using the standard preconditioned Crank-Nicolson (pCN) algorithm and giving different numerical examples. To provide a global view of our study, the major contributions of this work can be summarised as follows.
\begin{itemize}
\item{We propose to use the HFTG prior of Bayesian inference method for image reconstruction. This hybrid prior, on the one hand, preserves detailed information about the images, and on the other hand allows to build a theoretical analysis in the infinite-dimensional Bayesian inference framework. Afterwards, the fractional Sobolev space for the corresponding HFTG prior is constructed and proved to be a separable space, which is essential to establish probabilities and integrals theory in the infinite dimensional Bayesian method.}
\item{We investigate the nature of the posterior distribution of the Bayesian approach based on the HFTG prior. It reveals the well-posedness framework of the inverse problem and the numerical approximation to the posterior measure, which verify the discretization-invariant (or dimension-independent) \cite{bui2016,lassas2004} property of our algorithm}.
\item{Finally, according to the smoothness at different regions of images, the true images are reconstructed by using different fractional orders in HFTG prior, the reconstruction results are substantial improvement, thus verifying the robustness and effectiveness of our proposed priror.}
\end{itemize}
The outline of this paper is given as follows. In section 2, we provide preliminary knowledge on definitions and some basic properties of the Hadamard fractional derivative. In section 3, we build the HFTG prior and give some common properties of the posterior distribution based on the Bayesian framework with HFTG prior. Sections 4 and 5 are respectively devoted to numerical experiments and conclusion.
%
%
\section{Preliminaries}
In this section we present the definitions and some properties of the Hadamard fractional derivative. In particular, the Hadamard fractional order derivative is a special case of the general Riemann-Liouville fractional derivative in \cite{Samko1993,Kilbas2006}.

\begin{defn}\label{Hadamard fractional derivative}Let $\Omega=[a,b]$ be a interval with $0<a<b$, and $f(x)$ be a function defined in $\Omega$. For $\alpha \in (n-1,n)$, $n\in {\mathds{N}}^{+}$, the left Hadamard fractional derivative of $f(x)$ of order $\alpha$ is given by
\begin{equation}
^H\negmedspace D^{\alpha}_{[a,x]}f(x)=\frac{1}{\Gamma(n-\alpha)}\left(x\frac{d}{dx}\right)^n\int_a^x\left(\ln\frac{x}{t}\right)^{n-\alpha-1}\frac{f(t)}{t}dt,
\end{equation}
and the right Hadamard fractional derivative of $f(x)$ of order $\alpha$ is given by
\begin{equation}
^H\negmedspace D^{\alpha}_{[x,b]}f(x)=\frac{1}{\Gamma(n-\alpha)}\left(-x\frac{d}{dx}\right)^n\int_x^b\left(\ln\frac{t}{x}\right)^{n-\alpha-1}\frac{f(t)}{t}dt,
\end{equation}
meanwhile, the Riesz (or central) Hadamard fractional derivative is given by
\begin{equation}^{RH}\negmedspace D^{\alpha}_{[a,b]}f(x):=\frac{1}{2}\left(^{H}\negmedspace D^{\alpha}_{[a,x]}f(x)+(-1)^n\,{^H}\negmedspace  D^{\alpha}_{[x,b]}f(x)\right),
\end{equation}
where, $\Gamma(x)$ represents Gamma function.
\end{defn}

Next, we evoke another Hadamard fractional derivative, which is motivated by the definition of the classical Caputo fractional derivative given by \cite{Samko1993,Kilbas2006,Pdlubny1999}.
\begin{defn} \label{Caputo fractional derivative}
Let $\Omega=[a,b]$ be a interval with $0<a<b$, and $f(x)$ be a function defined in $\Omega$. For $\alpha \in (n-1,n)$, $n\in {\mathds{N}}^{+}$, the left Caputo fractional derivative of $f(x)$ of order $\alpha$ is given by
\begin{equation}
^C\! D^{\alpha}_{[a,x]}f(x):=\frac{1}{\Gamma(n-\alpha)}\int_a^x\left(\ln\frac{x}{t}\right)^{n-\alpha-1}\left(t\frac{d}{dt}\right)^n \negmedspace f(t)\frac{dt}{t}
\end{equation}
and the right Caputo fractional derivative of $f(x)$ of order $\alpha$ is given by
\begin{equation}
{^C}\!D^{\alpha}_{[x,b]}f(x):=\frac{1}{\Gamma(n-\alpha)}\int_x^b\left(\ln\frac{t}{x}\right)^{n-\alpha-1}\left(-t\frac{d}{dt}\right)^n \negmedspace f(t)\frac{dt}{t}.
\end{equation}
Meanwhile, the Riesz-Caputo fractional derivative is given by
$${^{RC}}\! D^{\alpha}_{[a,b]}f(x):=\frac{1}{2}\left({^C}\!D^{\alpha}_{[a,x]}f(x)+(-1)^n\,
{^C}\!D^{\alpha}_{[x,b]}f(x)\right).$$
\end{defn}
To simplify notation, we will use the abbreviated differential operator form
$$D_{H}=x\frac{d}{dx},D_{H}^n=\underbrace{D_{H}\cdot D_{H} \cdots D_{H} }_{n\  times}.$$

Despite that the above two definitions are different from each other, there is a relationship between them as \cite{Almeida2017,Jarad2020,Sousa2018}.
\begin{thm}\label{relationship}
If $f \in C^n(\Omega)$ and $\alpha>0$, then
\begin{align*}
{^C}\!D^{\alpha}_{[a,x]}f(x)&={^H}\negmedspace D^{\alpha}_{[a,x]}\left[f(x)-\sum^{n-1}_{k=0}\frac{D_{H}^kf(a)}{k!}\left(\ln\frac{x}{a}\right)^k \right]\\
&={^H}\negmedspace D^{\alpha}_{[a,x]}f(x)-\sum^{n-1}_{k=0}\frac{D_{H}^k f(a)}{\Gamma(k-\alpha+1)}\left(\ln\frac{x}{a}\right)^{k-\alpha},
\end{align*}
and
\begin{align*}
{^C}\!D^{\alpha}_{[x,b]}f(x)&={^H}\negmedspace D^{\alpha}_{[x,b]}\left[f(x)-\sum^{n-1}_{k=0}\frac{(-1)^{k}D_{H}^k f(b)}{k!}\left(\ln\frac{b}{x}\right)^k \right]\\
&={^H}\negmedspace D^{\alpha}_{[x,b]}f(x)-\sum^{n-1}_{k=0}\frac{(-1)^{k}D_{H}^k f(b)}{\Gamma(k-\alpha+1)}\left(\ln\frac{b}{x}\right)^{k-\alpha}.
\end{align*}
\end{thm}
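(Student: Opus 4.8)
The plan is to reduce the two Hadamard operators to their classical Riemann--Liouville and Caputo counterparts through the exponential change of variables $x=e^{s}$, and then to invoke the well-known relationship between the classical Caputo and Riemann--Liouville derivatives. Set $s=\ln x$, $\tau=\ln t$, and define $g(s):=f(e^{s})$ on $[\ln a,\ln b]$. The decisive observation is that $D_{H}=x\frac{d}{dx}$ acts as ordinary differentiation in the $s$-variable, i.e. $(D_{H}^{k}f)(x)=g^{(k)}(\ln x)$ for $0\le k\le n$ (valid because $f\in C^{n}(\Omega)$). Moreover the Hadamard kernel and measure transform cleanly, $\ln\frac{x}{t}=s-\tau$ and $\frac{dt}{t}=d\tau$. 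Substituting these into the definitions shows that ${}^{H}D^{\alpha}_{[a,x]}f(x)$ becomes the classical left Riemann--Liouville derivative of $g$ on $[\ln a,s]$, while ${}^{C}D^{\alpha}_{[a,x]}f(x)$ becomes the classical left Caputo derivative of $g$; the only difference between the two is whether $D_{H}^{n}$ sits inside or outside the fractional integral, which is exactly the distinction between the Caputo and Riemann--Liouville forms.

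Once this reduction is established, I would apply the classical identity (see \cite{Kilbas2006,Pdlubny1999})
\begin{equation*}
{}^{C}D^{\alpha}g(s)={}^{RL}D^{\alpha}\Big[g(s)-\sum_{k=0}^{n-1}\frac{g^{(k)}(\ln a)}{k!}(s-\ln a)^{k}\Big],
\end{equation*}
and then undo the substitution. Since $(s-\ln a)^{k}=\big(\ln\frac{x}{a}\big)^{k}$ and $g^{(k)}(\ln a)=(D_{H}^{k}f)(a)$, this yields precisely the first claimed equality. The second equality then follows by computing the Hadamard derivative of the subtracted polynomial term by term: using linearity together with the log-power rule
\begin{equation*}
{}^{H}D^{\alpha}_{[a,x]}\Big(\ln\frac{x}{a}\Big)^{k}=\frac{\Gamma(k+1)}{\Gamma(k-\alpha+1)}\Big(\ln\frac{x}{a}\Big)^{k-\alpha}=\frac{k!}{\Gamma(k-\alpha+1)}\Big(\ln\frac{x}{a}\Big)^{k-\alpha},
\end{equation*}
which is itself the image under $x=e^{s}$ of the classical rule ${}^{RL}D^{\alpha}s^{k}=\frac{\Gamma(k+1)}{\Gamma(k-\alpha+1)}s^{k-\alpha}$. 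Cancelling the $k!$ factors gives the stated sum. The right-derivative identities are obtained by the mirror substitution, replacing $D_{H}$ with $-x\frac{d}{dx}$ and the kernel by $\ln\frac{t}{x}$, so that the interval $[x,b]$ plays the role of the reflected interval; the argument is otherwise identical.

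The main obstacle I anticipate is the rigorous justification of the reduction step, rather than the algebra that follows. Specifically, one must verify that $D_{H}^{n}$ may be moved through the fractional integral and that differentiation under the integral sign is legitimate, both of which rely on $f\in C^{n}(\Omega)$ and on the integrability of the weakly singular kernel $(\ln\frac{x}{t})^{n-\alpha-1}$ near $t=x$. Equivalently, if one prefers a direct argument avoiding the substitution, the same content is captured by integrating by parts $n$ times in the Hadamard integral: the boundary terms produced at $t=a$ are exactly the quantities $\frac{D_{H}^{k}f(a)}{\Gamma(k-\alpha+1)}(\ln\frac{x}{a})^{k-\alpha}$, and checking that these coincide with the terms subtracted in the Hadamard--Taylor polynomial is the one place where care with indices and Gamma-function identities is required.
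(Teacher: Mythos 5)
Your proof is correct, but note that the paper does not actually prove Theorem \ref{relationship}: it states the result and cites Almeida, Jarad--Abdeljawad, and Sousa--Oliveira, where the analogous identity is established for the general $\psi$-fractional derivative by direct computation (essentially the repeated-integration-by-parts route you sketch as an alternative at the end). Your change-of-variables reduction $x=e^{s}$, $g(s)=f(e^{s})$, under which $D_{H}$ becomes $d/ds$, $\ln\frac{x}{t}$ becomes $s-\tau$, and $\frac{dt}{t}$ becomes $d\tau$, so that ${}^{H}\!D^{\alpha}_{[a,x]}f$ and ${}^{C}\!D^{\alpha}_{[a,x]}f$ turn into the classical left Riemann--Liouville and Caputo derivatives of $g$ on $[\ln a,\ln b]$, is a genuinely different and arguably cleaner route: it imports the classical Caputo--Riemann--Liouville relation and the power rule ${}^{RL}\!D^{\alpha}(s-\ln a)^{k}=\frac{k!}{\Gamma(k-\alpha+1)}(s-\ln a)^{k-\alpha}$ wholesale, and it is precisely the correspondence the paper itself exploits later (Remark \ref{general classical} with $\psi(x)=\ln x$) to discretize the Hadamard derivative, so it fits the paper's framework well. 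What the direct approach of the cited references buys instead is independence from the classical theorem (useful if one wants the argument self-contained for general $\psi$). Two small points to make your write-up airtight: state explicitly that the classical identity you invoke holds for $g\in C^{n}$ (which follows from $f\in C^{n}(\Omega)$ and the smoothness of $x\mapsto\ln x$ on $[a,b]$ with $a>0$), and observe that $\Gamma(k-\alpha+1)$ never hits a pole because $\alpha\in(n-1,n)$ is non-integer; with those remarks the reduction step you flag as the main obstacle is fully justified.
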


\begin{rem}\label{equivalence}(\textbf{Equivalence})
Let $f \in C^n(\Omega)$, for all $k=0, \dots, n-1$, if $D_{H}^k  f(a)=0$, we have
\begin{equation}\label{relation at a}
{^C}\!D^{\alpha}_{[a,x]}f(x)={^H}\negmedspace D^{\alpha}_{[a,x]}f(x),
\end{equation}
and if $D_{H}^k  f(b)=0$, we deduce
\begin{equation}\label{relation at b}
{^C}\!D^{\alpha}_{[x,b]}f(x)={^H}\negmedspace D^{\alpha}_{[x,b]}f(x).
\end{equation}
From the definitions of Riesz fractional derivative, show that
\begin{equation}\label{relation riesz}
{^{RC}}\!D^{\alpha}_{[a,b]}f(x)={^{RH}}\negmedspace D^{\alpha}_{[a,b]}f(x).
\end{equation}
Thus, under the above conditions, the Hadamard fractional derivatives are equivalent to the Hadamard-Caputo fractional derivatives.
\end{rem}

In addition, there is a common property for the fractional differential operator.
\begin{property}\label{pro:linearity}(\textbf{Linearity})
Let $\mathcal{P}$ denote the fractional calculus operator, $k, l\in \mathbb{R}$ are constants, for any fractional differentiable functions $f(x)$ and $g(x)$, we have:
\[\mathcal{P}(kf(x)+lg(x))=k\mathcal{P}(f(x))+l\mathcal{P}(g(x)).\]
\end{property}

The next, we will establish fractional integration by parts formula similarly as \cite{Almeida2017}, which is useful to derive the property of fractional Sobolev space.
\begin{thm}\label{fractional integration by parts formula} (\textbf {fractional integration by parts formula})
Given $f \in C(\Omega)$ and \\$g \in C^n (\Omega)$, we have that for all $\alpha>0$,
\begin{equation}\label{left fractional integration by parts formula}
\begin{split}
\int^b_ax^{-1}f(x)\,{^C}\!D^{\alpha}_{[a,x]}g(x)dx
=&\int^b_ax^{-1}\,{^H}\negmedspace D^{\alpha}_{[x,b]}f(x)\,g(x)dx\\
+&\sum^{n-1}_{k=0}\left[{^H}\negmedspace D^{\alpha-n+k}_{[x,b]}f(x)\, D_{H}^{n-k-1}g(x)\right]^{x=b}_{x=a},
\end{split}
\end{equation}
and
\begin{equation}\label{right fractional integration by parts formula}
\begin{split}
\int^b_ax^{-1}f(x)\,{^C}\!D^{\alpha}_{[x,b]}g(x)dx
=&\int^b_ax^{-1}\,^H\negmedspace D^{\alpha}_{[a,x]}f(x)\,g(x)dx\\
+&\sum^{n-1}_{k=0}\left[(-1)^{n+k}\,{^H}\negmedspace D^{\alpha-n+k}_{[a,x]}f(x)\, D_{H}^{n-k-1}g(x)\right]^{x=b}_{x=a}.
\end{split}
\end{equation}
Then,
\begin{equation}\label{Riesz fractional integration by parts formula}
\begin{split}
\int^b_ax^{-1}f(x)\,{^{RC}}\!D^{\alpha}_{[a,b]}g(x)dx
=&(-1)^n\int^b_ax^{-1}\,{^{RH}}\negmedspace D^{\alpha}_{[a,b]}f(x)\,g(x)dx\\
+&\sum^{n-1}_{k=0}\left[(-1)^{k}\,{^{RH}}\negmedspace D^{\alpha-n+k}_{[a,b]}f(x)\,D_{H}^{n-k-1}g(x)\right]^{x=b}_{x=a}.
\end{split}
\end{equation}
\end{thm}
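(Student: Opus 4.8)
The plan is to prove the left formula \eqref{left fractional integration by parts formula} directly from the definitions, obtain the right formula \eqref{right fractional integration by parts formula} by the same argument with the endpoints exchanged, and finally read off the Riesz formula \eqref{Riesz fractional integration by parts formula} as a linear combination of the two. Throughout I adopt the convention that for a nonpositive order $\gamma\le 0$ the symbol ${}^H\!D^{\gamma}_{[x,b]}$ denotes the right Hadamard fractional integral ${}^H\!I^{-\gamma}_{[x,b]}$ of order $-\gamma$, so that in particular ${}^H\!D^{\alpha-n}_{[x,b]}f={}^H\!I^{n-\alpha}_{[x,b]}f$. This is what lets the boundary index $k$ range continuously from $k=0$ (where the factor is a genuine fractional integral) up to $k=n-1$ (where it is an honest fractional derivative).

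First I would insert the definition of ${}^C\!D^{\alpha}_{[a,x]}g$ into the left-hand side of \eqref{left fractional integration by parts formula} and interchange the two integrations. Because $\alpha\in(n-1,n)$, the kernel $(\ln\frac{x}{t})^{n-\alpha-1}$ is integrable over the triangle $\{a\le t\le x\le b\}$ and the remaining factors $x^{-1}f(x)$ and $t^{-1}D_{H}^{\,n}g(t)$ are continuous, so Fubini's theorem applies and the order of integration may be swapped without difficulty. After the swap the inner $x$-integral equals $\Gamma(n-\alpha)\,{}^H\!I^{n-\alpha}_{[t,b]}f(t)$, and absorbing the prefactor $1/\Gamma(n-\alpha)$ collapses the left-hand side to $\int_a^b t^{-1}\big({}^H\!I^{n-\alpha}_{[t,b]}f\big)(t)\,D_{H}^{\,n}g(t)\,dt$.

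The core of the argument is then $n$ successive Hadamard integrations by parts, based on the elementary identity $\int_a^b t^{-1}\phi\,(D_{H}\psi)\,dt=[\phi\psi]_a^b-\int_a^b t^{-1}(D_{H}\phi)\,\psi\,dt$ (which is just ordinary integration by parts after writing $t^{-1}D_{H}\psi=\psi'$), together with the order-lowering rule $D_{H}\,{}^H\!I^{\beta}_{[t,b]}f=-{}^H\!I^{\beta-1}_{[t,b]}f$ for the right Hadamard integral. Peeling one $D_{H}$ off $D_{H}^{\,n}g$ at each stage moves it onto ${}^H\!I^{n-\alpha}_{[t,b]}f$, lowers that order by one, and deposits a boundary term $\big[{}^H\!D^{\alpha-n+k}_{[x,b]}f\cdot D_{H}^{\,n-k-1}g\big]_a^b$; the two minus signs — one from the by-parts identity, one from the order-lowering rule — cancel at every step, which is precisely why all the boundary terms in \eqref{left fractional integration by parts formula} carry the same positive sign and why, after $n$ steps, the surviving volume integral is $+\int_a^b x^{-1}\,{}^H\!D^{\alpha}_{[x,b]}f\,g\,dx$ (the order having dropped from $n-\alpha$ to $-\alpha$). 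Tracking these signs cleanly across the iteration, and justifying that each intermediate $D_{H}\,{}^H\!I^{\beta}$ may be read as a fractional integral/derivative even as $\beta$ passes from positive to negative, is the one place that needs genuine care; a clean way to discharge it is the substitution $x=e^{s}$, under which $D_{H}=\tfrac{d}{ds}$ and the Hadamard and Caputo--Hadamard operators become the ordinary Riemann--Liouville and Caputo operators, so the whole computation reduces to the classical fractional integration-by-parts formula of \cite{Almeida2017}.

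Finally, the right formula \eqref{right fractional integration by parts formula} follows verbatim with the roles of the endpoints reversed, the extra factors $(-1)^{n+k}$ arising from the operator $-D_{H}=-x\frac{d}{dx}$ built into the right derivative. The Riesz formula \eqref{Riesz fractional integration by parts formula} is then obtained by forming $\tfrac12(\text{left})+\tfrac{(-1)^n}{2}(\text{right})$: in the volume integral this combination reassembles $\tfrac12\big({}^H\!D^{\alpha}_{[x,b]}f+(-1)^n{}^H\!D^{\alpha}_{[a,x]}f\big)=(-1)^n\,{}^{RH}\!D^{\alpha}_{[a,b]}f$, which accounts for the leading $(-1)^n$ on the right-hand side, while the two families of boundary terms merge into the single Riesz boundary sum. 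I expect the sign bookkeeping in the iterated by-parts step — and the correct interpretation of ${}^H\!D^{\alpha-n+k}$ as an integral for the small values of $k$ — to be the main obstacle, with the Fubini interchange and the final linear combination being routine.
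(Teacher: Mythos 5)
Your proposal is correct and follows essentially the same route as the paper's proof: insert the definition of the Caputo--Hadamard derivative, swap the order of integration (the paper calls this Dirichlet's formula), and then perform $n$ successive integrations by parts that transfer one factor of $D_{H}$ per step from $g$ onto the right Hadamard fractional integral of $f$, producing the boundary sum, with the right-sided and Riesz identities obtained exactly as you describe. Your additional remarks — the explicit order-lowering rule, the observation that ${}^H\!D^{\alpha-n+k}$ must be read as a fractional integral when $k=0$, and the reduction to the classical case via $x=e^{s}$ — are sound refinements of details the paper leaves implicit, but they do not change the argument.
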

\begin{proof}
Now, if we can prove the first equation is true, the second equation will be true similarly, furthermore the third equation holds with the definition of the Riesz fractional derivative. Using the definition of Hadamard fractional derivative and Dirichlet's formula, we first compute
\begin{align*}
\int^b_ax^{-1}f(x)\,{^C}\!D^{\alpha}_{[a,x]}g(x)dx
=&\frac{1}{\Gamma(n-\alpha)}\int^b_ax^{-1}f(x)\int_a^x\left(\ln\frac{x}{t}\right)^{n-\alpha-1}D_{H}^nf(t)\,\frac{dt}{t}dx\\
=&\frac{1}{\Gamma(n-\alpha)}\int^b_a t^{-1}D_{H}^ng(t)\int_t^b\left(\ln\frac{x}{t}\right)^{n-\alpha-1} \frac{f(x)}{x}dxdt\\
=&\frac{1}{\Gamma(n-\alpha)}\int^b_a\int_x^b\left(\ln\frac{t}{x}\right)^{n-\alpha-1} \frac{f(t)}{t}dt\cdot\frac{d}{dx}D_{H}^{n-1}g(x)dx,
\end{align*}

by applying integration by parts for the right side of the last equation, we can get
\begin{align*}
&\left[\frac{1}{\Gamma(n-\alpha)}\int_x^b\left(\ln\frac{t}{x}\right)^{n-\alpha-1} \frac{f(t)}{t}dt\cdot D_{H}^{n-1}g(x)\right]^{x=b}_{x=a}\\
&\ \ \ -\frac{1}{\Gamma(n-\alpha)}\int^b_a\frac{d}{dx}\left(\int_x^b\left(\ln\frac{t}{x}\right)^{n-\alpha-1} \frac{f(t)}{t}dt\right)D_{H}^{n-1} g(x)dx\\
=&\left[\frac{1}{\Gamma(n-\alpha)}\int_x^b\left(\ln\frac{t}{x}\right)^{n-\alpha-1} \frac{f(t)}{t}dt\cdot D_{H}^{n-1}g(x)\right]^{x=b}_{x=a}\\
&\ \ \ +\frac{1}{\Gamma(n-\alpha)}\int^b_a\left(-x\frac{d}{dx}\right)\left(\int_x^b\left(\ln\frac{t}{x}\right)^{n-\alpha-1} \frac{f(t)}{t}dt\right)\cdot \frac{d}{dx}D_{H}^{n-2} g(x)dx.
\end{align*}
Let us apply integration by parts once more, the last formula is equal to
\begin{align*}
&\sum^1_{k=0}\left[\frac{1}{\Gamma(n-\alpha)}\left(-x\frac{d}{dx}\right)^k \int_x^b\left(\ln\frac{t}{x}\right)^{n-\alpha-1} \frac{f(t)}{t}dt\cdot D_{H}^{n-k-1}g(x)\right]^{x=b}_{x=a}\\
&\ \ \ +\frac{1}{\Gamma(n-\alpha)}\int^b_a\left(-x\frac{d}{dx}\right)^2\left(\int_x^b\left(\ln\frac{t}{x}\right)^{n-\alpha-1} \frac{f(t)}{t}dt\right)\cdot \frac{d}{dx}D_{H}^{n-3} g(x)dx.
\end{align*}
Repeating the process, we have
\begin{align*}
&\sum^{n-1}_{k=0}\left[\frac{1}{\Gamma(n-\alpha)}\left(-x\frac{d}{dx}\right)^k \int_x^b\left(\ln\frac{t}{x}\right)^{n-\alpha-1} \frac{f(t)}{t}dt\cdot D_{H}^{n-k-1}g(x)\right]^{x=b}_{x=a}\\
&\ \ \ +\frac{1}{\Gamma(n-\alpha)}\int^b_a\left(-x\frac{d}{dx}\right)^n\left(\int_x^b\left(\ln\frac{t}{x}\right)^{n-\alpha-1} \frac{f(t)}{t}dt\right)\cdot g(x) \,x^{-1}dx\\
=&\sum^{n-1}_{k=0}\left[^H\negmedspace D^{\alpha-n+k}_{[x,b]}f(x)\,D_{H}^{n-k-1} g(x)\right]^{x=b}_{x=a}+\int^b_a x^{-1}\,^H\negmedspace D^{\alpha}_{[x,b]}f(x)\,g(x)dx.
\end{align*}
Consequently
\begin{align*}
\int^b_ax^{-1}f(x)\,{^C}\!D^{\alpha}_{[a,x]}g(x)dx
=&\int^b_ax^{-1}\,{^H}\negmedspace D^{\alpha}_{[x,b]}f(x)\,g(x)dx\\
+&\sum^{n-1}_{k=0}\left[{^H}\negmedspace D^{\alpha-n+k}_{[x,b]}f(x)\, D_{H}^{n-k-1}g(x)\right]^{x=b}_{x=a}.
\end{align*}

Similarly, we can prove that the second equation is true, i.e.,
\begin{align*}
\int^b_ax^{-1}f(x)\,{^C}\!D^{\alpha}_{[x,b]}g(x)dx
=&\int^b_ax^{-1}\,^H\negmedspace D^{\alpha}_{[a,x]}f(x)\,g(x)dx\\
+&\sum^{n-1}_{k=0}\left[(-1)^{n+k}\,{^H}\negmedspace D^{\alpha-n+k}_{[a,x]}f(x)\, D_{H}^{n-k-1}g(x)\right]^{x=b}_{x=a}.
\end{align*}

Finally, using the definition of Riesz fractional derivative and the equations \eqref{left fractional integration by parts formula} and \eqref{right fractional integration by parts formula}, we see that the third equation \eqref{Riesz fractional integration by parts formula} holds.
\end{proof}

Obviously, if for all $k=0, \dots, n-1$, $D_{H}^k g(a)=0$ and $D_{H}^k g(b)=0$, combining with equations \eqref{relation at a}, \eqref{relation at b} and \eqref{relation riesz}, then equations \eqref{left fractional integration by parts formula}, \eqref{right fractional integration by parts formula} and \eqref{Riesz fractional integration by parts formula} in Theorem \ref{fractional integration by parts formula} will become
\begin{equation}\label{left parts equation}
\int^b_ax^{-1}f(x)\,{^H}\negmedspace D^{\alpha}_{[a,x]}g(x)dx
=\int^b_ax^{-1}\,{^H}\negmedspace D^{\alpha}_{[x,b]}f(x)\,g(x)dx,
\end{equation}
\begin{equation}\label{right parts equation}
\int^b_ax^{-1}f(x)\,{^H}\negmedspace D^{\alpha}_{[x,b]}g(x)dx
=\int^b_ax^{-1}\,^H\negmedspace D^{\alpha}_{[a,x]}f(x)\,g(x)dx,
\end{equation}
and
\begin{equation}\label{riesz parts equation}
\int^b_ax^{-1}f(x)\,{^{RH}}\!D^{\alpha}_{[a,b]}g(x)dx
=(-1)^n\int^b_ax^{-1}\,{^{RH}}\negmedspace D^{\alpha}_{[a,b]}f(x)\,g(x)dx.
\end{equation}

In subsequent papers, to distinguish the definitions, we use $^C\!D^{\alpha}$ and $D^{\alpha}$ to represent the fractional derivative based on Caputo and Hadamard fractional derivative respectively.

%
%
\section{The HFTG prior}\label{sec3}
In this section, based on the Bayesian framework with hybrid prior to inverse problems in section 1,  we will specify the space of unknown functions $X$ and the additional regularization term $R$, then construct the HFTG prior.
%
%
\subsection{The Hadamard Fractional Total Variation}\label{FTV}\label{sec3.1}

This subsection first studies the fractional Sobolev space and prove the separablility, which plays an important role in the development of probability and integration in infinite dimensional spaces. Second define the fractional total variation based on the Hadamard fractional derivative.

\begin{defn}\label{sobolevspace}(\textbf{Fractional Sobolev Space}) For any positive integer $p\in\mathbf{N}^+$, let $$W^{\alpha}_{p}(\Omega)=\{u\in L^p(\Omega)\big|\|u\|_{W^{\alpha}_{p}(\Omega)}<+\infty\}$$ be a fractional Sobolev function space endowed with the norm
$$\|u\|_{W^{\alpha}_{p}(\Omega)}=\left(\int_a^b|u|^p dx+\int_a^b|D_{[a,b]}^{\alpha} u|^p dx\right)^{\frac{1}{p}}.$$
\end{defn}
Specially, when $p=2$, the above norm is generated by the following inner product$$\langle u,v\rangle_{W^{\alpha}_{2}(\Omega)}=\int_a^buvdx+\int_a^b(D_{[a,b]}^{\alpha}u)(D_{[a,b]}^{\alpha}v)dx,\ \ u,v \in W^{\alpha}_{2}(\Omega).$$


Then, before discussing the total fractional-order variation, we give the following definition as \cite{Zhang2015,Chen2015}, which based on the equivalence in Remark \ref{equivalence}.
\begin{defn}\label{space of test functions}(\textbf{Spaces of test functions})
Denote by $\mathcal{C}^{\ell}(\Omega,\mathcal{R}^d)$ the space of an $\ell$-order continuously differentiable functions in $\Omega \subset \mathcal{R}^d$. Then an $\ell$-order compactly supported continuous function space as a subspace $\mathcal{C}^{\ell}(\Omega,\mathcal{R}^d)$ is denoted by $\mathcal{C}^{\ell}_{0}(\Omega,\mathcal{R}^d)$, in which each member $v:\Omega\mapsto\mathcal{R}^d$ satisfies the homogeneous boundary conditions $D_{H}^iv(x)|_{\partial\Omega}=0$ for all $i=0, 1, \dots, \ell$.
\end{defn}
With a test function $g(x)\in \mathcal{C}^{n}_{0}(\Omega,\mathcal{R})$, the $\alpha$-order integration by parts formulas can also be rewritten as equations \eqref{left parts equation}, \eqref{right parts equation} and \eqref{riesz parts equation}.

Next, we can prove that the fractional Sobolev space is a separable Banach space with $1\leqslant p<\infty$ following by the ideas of classical Sobolev space as \cite{Brezis2011,Evans1998} and \cite{Agrawal2007,Bourdin2015,Idczak2013} .
\begin{lem}\label{Banachspace}
The fractional Sobolev space $W^{\alpha}_{p}(\Omega)$ is a Banach space.
\end{lem}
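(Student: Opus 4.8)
The plan is to follow the classical completeness argument for Sobolev spaces, adapted to the weak Riesz Hadamard fractional derivative. First I would verify that $\|\cdot\|_{W^{\alpha}_{p}(\Omega)}$ is genuinely a norm: positivity and homogeneity are immediate, while the triangle inequality follows from Minkowski's inequality applied separately to the two $L^p$ contributions $\|u\|_{L^p}$ and $\|D^{\alpha}_{[a,b]}u\|_{L^p}$, using the linearity of the fractional operator (Property \ref{pro:linearity}) to write $D^{\alpha}_{[a,b]}(u+v)=D^{\alpha}_{[a,b]}u+D^{\alpha}_{[a,b]}v$. Throughout, $D^{\alpha}_{[a,b]}u$ is understood in the weak sense dictated by the integration-by-parts identity \eqref{riesz parts equation}: a function $v\in L^p(\Omega)$ is the weak derivative of $u$ provided
\begin{equation*}
\int_a^b x^{-1}u(x)\,D^{\alpha}_{[a,b]}g(x)\,dx=(-1)^n\int_a^b x^{-1}v(x)\,g(x)\,dx
\end{equation*}
holds for every test function $g\in\mathcal{C}^{n}_{0}(\Omega,\mathbb{R})$, the boundary terms of Theorem \ref{fractional integration by parts formula} vanishing by the homogeneous conditions $D_{H}^i g|_{\partial\Omega}=0$.

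The heart of the matter is completeness. I would take an arbitrary Cauchy sequence $\{u_k\}$ in $W^{\alpha}_{p}(\Omega)$. By the definition of the norm, both $\{u_k\}$ and $\{D^{\alpha}_{[a,b]}u_k\}$ are Cauchy in $L^p(\Omega)$, and since $L^p(\Omega)$ is complete for $1\leqslant p<\infty$ there exist $u,v\in L^p(\Omega)$ with $u_k\to u$ and $D^{\alpha}_{[a,b]}u_k\to v$ in $L^p(\Omega)$. It then remains to show that $v$ is exactly the weak Riesz Hadamard derivative of $u$; once this is done, $u\in W^{\alpha}_{p}(\Omega)$ and $\|u_k-u\|_{W^{\alpha}_{p}(\Omega)}\to0$, so the space is complete.

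The key step, and the main obstacle, is this identification $v=D^{\alpha}_{[a,b]}u$. I would fix $g\in\mathcal{C}^{n}_{0}(\Omega,\mathbb{R})$, write the weak identity
\begin{equation*}
\int_a^b x^{-1}u_k(x)\,D^{\alpha}_{[a,b]}g(x)\,dx=(-1)^n\int_a^b x^{-1}D^{\alpha}_{[a,b]}u_k(x)\,g(x)\,dx
\end{equation*}
for each $k$, and pass to the limit on both sides. The crucial observation legitimizing the limit is that the weight $x^{-1}$ is bounded on $\Omega=[a,b]$ precisely because $0<a<b$, so $\|x^{-1}\|_{L^{\infty}(\Omega)}\leqslant a^{-1}$; moreover $g$ and $D^{\alpha}_{[a,b]}g$ are continuous, hence lie in $L^{p'}(\Omega)$ with $p'$ the conjugate exponent. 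Hölder's inequality then yields
\begin{equation*}
\left|\int_a^b x^{-1}(u_k-u)\,D^{\alpha}_{[a,b]}g\,dx\right|\leqslant a^{-1}\,\|u_k-u\|_{L^p(\Omega)}\,\|D^{\alpha}_{[a,b]}g\|_{L^{p'}(\Omega)}\to0,
\end{equation*}
and the symmetric estimate, with $D^{\alpha}_{[a,b]}u_k-v$ in place of $u_k-u$, controls the right-hand side. Passing to the limit produces the weak identity for the pair $(u,v)$, so indeed $v=D^{\alpha}_{[a,b]}u$.

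I expect the only delicate points to be confirming that the fractional derivative $D^{\alpha}_{[a,b]}g$ of a smooth, compactly supported test function is continuous on $[a,b]$—which holds because the logarithmic kernel $(\ln(x/t))^{n-\alpha-1}$ is integrable and the singularity at the origin is avoided by $a>0$—and verifying once more that the boundary contributions in \eqref{Riesz fractional integration by parts formula} genuinely disappear for $g\in\mathcal{C}^{n}_{0}$, so that the weak formulation is the correct one. With these routine checks in place, the three displays above combine to give completeness, and together with the norm verification this establishes that $W^{\alpha}_{p}(\Omega)$ is a Banach space.
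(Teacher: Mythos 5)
Your proposal is correct and follows essentially the same route as the paper: Minkowski's inequality for the norm axioms, completeness of $L^p(\Omega)$ to extract limits $u$ and $v$ of $\{u_k\}$ and $\{D^{\alpha}_{[a,b]}u_k\}$, and the integration-by-parts identity against test functions in $\mathcal{C}^{n}_{0}(\Omega,\mathbb{R})$ to identify $v$ as the weak fractional derivative of $u$. Your version is in fact slightly more careful than the paper's, since you make the weak-derivative definition explicit and justify the limit passage via H\"older's inequality and the bound $x^{-1}\leqslant a^{-1}$, steps the paper leaves implicit.
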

\begin{proof}
(1) First, we should testify that the $\|u\|_{W^{\alpha}_{p}(\Omega)}$ is a norm. By the definitions of $\|u\|_{L^p(\Omega)}$ and the fractional derivative $D_{[a,b]}^{\alpha}u$ with the linearity, we can easily prove
\[\|qu\|_{W^{\alpha}_{p}(\Omega)}=|q|\|u\|_{W^{\alpha}_{p}(\Omega)},\] and
\[\|u\|_{W^{\alpha}_{p}(\Omega)}=0 \ if\  and\  only\  if\  u=0 \ a.e.\]
Next, assume $u,v \in W^{\alpha}_{p}(\Omega)$ and $1\leqslant p<\infty$, according to the Minkowski's inequality, we can obtain
\begin{align*}
\|u+v\|_{W^{\alpha}_{p}(\Omega)}
=&\left(\|u+v\|_{L^{p}(\Omega)}^{p}+\|D_{[a,b]}^{\alpha} u+D_{[a,b]}^{\alpha} v\|_{L^{p}(\Omega)}^{p}\right)^{\frac{1}{p}}\\
\leqslant&[(\|u\|_{L^{p}(\Omega)}+\|v\|_{L^{p}(\Omega)})^{p}+(\|D_{[a,b]}^{\alpha} u\|_{L^{p}(\Omega)}+\|D_{[a,b]}^{\alpha} v\|_{L^{p}(\Omega)})^{p}]^{\frac{1}{p}}\\
\leqslant&\left(\|u\|_{L^{p}(\Omega)}^{p}+\|D_{[a,b]}^{\alpha} u\|_{L^{p}(\Omega)}^{p}\right)^{\frac{1}{p}}+\left(\|v\|_{L^{p}(\Omega)}^{p}+\|D_{[a,b]}^{\alpha} v\|_{L^{p}(
\Omega)}^{p}\right)^{\frac{1}{p}}\\
=&\|u\|_{W^{\alpha}_{p}(\Omega)}+\|v\|_{W^{\alpha}_{p}(\Omega)}.
\end{align*}
Hence, $W^{\alpha}_{p}(\Omega)$ is a norm space.

(2) Then, it only need to prove the completeness of $W^{\alpha}_{p}(\Omega)$. Suppose $\left\{u_m\right\}_{m=1}^{\infty}\subset{W^{\alpha,p}_{\psi}(\Omega)}$ is a Cauchy sequence, following the definition of norm, $\left\{u_m\right\}_{m=1}^{\infty}$ and $\left\{D_{[a,b]}^{\alpha}u_m\right\}_{m=1}^{\infty}$ are both Cauchy sequence in $L^p(\Omega)$. According to the completeness of $L^p(\Omega)$, there exist two functions $u$ and $u^{\alpha}$ in $L^p(\Omega)$ such that $$u_m \rightarrow u,\ \  D_{[a,b]}^{\alpha}u_m \rightarrow u^{\alpha},\ \  in\  L^p(\Omega).$$
For any $g(x)\in \mathcal{C}^{n}_{0}(\Omega,\mathcal{R})$, we discover
\begin{align*}
\int^b_ax^{-1}\,u\,D_{[a,b]}^{\alpha}gdx=&\lim_{m\rightarrow +\infty}\int^b_a x^{-1}\,u_m\,D_{[a,b]}^{\alpha}gdx\\
=&\lim_{m\rightarrow +\infty}(-1)^n\int_a^bx^{-1}D_{[a,b]}^{\alpha}u_m\,gdx\\
=&(-1)^n\int_a^bx^{-1}\,u^{\alpha}\,gdx\\
=&(-1)^n\int_a^bx^{-1}D_{[a,b]}^{\alpha}u\,gdx.
\end{align*}
Thus $u^{\alpha}=D_{[a,b]}^{\alpha}u$, $u \in W^{\alpha}_{p}(\Omega)$, and $\|u_m-u\|_{W^{\alpha}_{p}(\Omega)}\rightarrow 0$ when $m\to\infty$. Conclusion, $W^{\alpha}_{p}(\Omega)$ is a Banach space.\\
\end{proof}

\begin{lem}\label{separablespace}
For all $1\leqslant p<\infty$, the space $W^{\alpha}_{p}(\Omega)$ is a separable space.
\end{lem}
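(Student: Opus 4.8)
The plan is to realize $W^{\alpha}_{p}(\Omega)$ as isometrically a subset of a space already known to be separable, and then to invoke the fact that in the metric setting separability is inherited by arbitrary subspaces. This is the standard ``isometric embedding into a product of $L^{p}$ spaces'' device used for classical Sobolev spaces, and I expect it to go through verbatim here since the norm of Definition \ref{sobolevspace} has exactly the additive $L^{p}$-in-each-slot structure that the device exploits.

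First I would recall the classical fact that $L^{p}(\Omega)$ is separable for every $1\leqslant p<\infty$ (see \cite{Brezis2011,Evans1998}), and conclude that the product space
$$E:=L^{p}(\Omega)\times L^{p}(\Omega),\qquad \|(f,g)\|_{E}=\left(\|f\|^{p}_{L^{p}(\Omega)}+\|g\|^{p}_{L^{p}(\Omega)}\right)^{\frac1p},$$
is separable as well, being a finite product of separable metric spaces. Next I would introduce the linear operator $T:W^{\alpha}_{p}(\Omega)\to E$ defined by $Tu=(u,\,D^{\alpha}_{[a,b]}u)$. Directly from the definition of the norm in Definition \ref{sobolevspace},
$$\|Tu\|_{E}=\left(\|u\|^{p}_{L^{p}(\Omega)}+\|D^{\alpha}_{[a,b]}u\|^{p}_{L^{p}(\Omega)}\right)^{\frac1p}=\|u\|_{W^{\alpha}_{p}(\Omega)},$$
so $T$ is an isometry of $W^{\alpha}_{p}(\Omega)$ onto the subset $T\big(W^{\alpha}_{p}(\Omega)\big)\subseteq E$; by Lemma \ref{Banachspace} this image is in fact a closed subspace, though completeness is not actually needed for what follows.

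To finish, I would argue that $T\big(W^{\alpha}_{p}(\Omega)\big)$, being a subset of the separable metric space $E$, is itself separable, and then transport separability back through the isometry $T$, which is in particular a homeomorphism onto its image. I do not anticipate a genuine obstacle; the only point that must be stated carefully is the descent of separability to a (not necessarily closed) subspace. This holds because a separable metric space is second countable and second countability is a hereditary property, so the subspace $T\big(W^{\alpha}_{p}(\Omega)\big)$ is second countable and hence separable. Since separability is a topological invariant and $T$ is an isometric isomorphism onto its image, $W^{\alpha}_{p}(\Omega)$ is separable. An alternative, more hands-on route would be to push a fixed countable dense subset of $E$ through approximations lying in the image to produce an explicit countable dense set, but the topological subspace argument is cleaner and avoids any approximation bookkeeping.
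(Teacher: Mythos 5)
Your proposal is correct and follows essentially the same route as the paper: an isometric embedding $u\mapsto(u,D^{\alpha}_{[a,b]}u)$ of $W^{\alpha}_{p}(\Omega)$ into the separable product $L^{p}(\Omega)\times L^{p}(\Omega)$, followed by heredity of separability for metric subspaces. Your explicit justification of that last heredity step (via second countability) is slightly more careful than the paper's, but the argument is the same.
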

\begin{proof}
Let us consider the product space $(L^p)^2=L^p\times L^p$ endowed with the norm $$\|(u,v)\|_{(L^p)^2}=(\|u\|_{L^p}^p+\|v\|_{L^p}^p)^{\frac{1}{p}}.$$
Since $1 \leqslant p <\infty$, the space $(L^p,\| \cdot \|_{L^p})$ is a separable space, therefor $((L^p)^2,\| \cdot \|_{(L^p)^2})$ is also a separable space. We define $O:=\left\{(u,D_{[a,b]}^{\alpha}u)\big|u \in W^{\alpha}_{p}(\Omega)\right\}$. Obviously, $O$ is a subspace of $((L^p)^2,\| \cdot \|_{(L^p)^2})$ and then $O$ is a separable space. Finally, defining the following mapping
\begin{align*}
T: W^{\alpha}_{p}(\Omega)&\rightarrow O\subset (L^p)^2 \\
u&\mapsto(u,D_{[a,b]}^{\alpha}u).
\end{align*}
We can prove that the mapping $T$ is one to one, and $$\|T(u)\|_{(L^p)^2}=\|u\|_{W^{\alpha}_{p}(\Omega)}.$$
Consequently, the mapping $T$ is isometric isomorphic to $O$, and then $W^{\alpha}_{p}(\Omega)$ is separable space with respect to $\| \cdot \|_{W^{\alpha}_{p}(\Omega)}$. \\
\end{proof}

Thus, when $p=2$, $W^{\alpha}_{2}(\Omega)$ is a separable Hilbert space.
\begin{lem}\label{lem:embedding}
The following embedding result holds: $$W^{\alpha}_{2}(\Omega)\subset W^{\alpha}_{1}(\Omega).$$
\end{lem}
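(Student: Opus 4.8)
The plan is to exploit the fact that $\Omega=[a,b]$ with $0<a<b$ is a bounded interval of finite Lebesgue measure $b-a$, on which the standard nesting $L^2(\Omega)\subset L^1(\Omega)$ holds. Concretely, for any $w\in L^2(\Omega)$ an application of the Cauchy--Schwarz (H\"older) inequality with conjugate exponents $2,2$ gives
$$\int_a^b|w|\,dx\le (b-a)^{1/2}\left(\int_a^b|w|^2\,dx\right)^{1/2},$$
so $w\in L^1(\Omega)$ with $\|w\|_{L^1(\Omega)}\le(b-a)^{1/2}\|w\|_{L^2(\Omega)}$. This is the single analytic ingredient.

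First I would take an arbitrary $u\in W^{\alpha}_{2}(\Omega)$. By Definition \ref{sobolevspace} this means $u\in L^2(\Omega)$ and that its weak Hadamard fractional derivative $u^{\alpha}:=D_{[a,b]}^{\alpha}u$ --- the $L^2$ function characterised through the integration-by-parts identity \eqref{riesz parts equation} tested against $g\in\mathcal{C}^{n}_{0}(\Omega,\mathcal{R})$, exactly as in the completeness argument of Lemma \ref{Banachspace} --- also lies in $L^2(\Omega)$. Applying the inequality above to $w=u$ and then to $w=u^{\alpha}$ shows that both belong to $L^1(\Omega)$. Next I would confirm that $u^{\alpha}$ serves as the weak fractional derivative of $u$ in the $L^1$ sense as well: the defining relation \eqref{riesz parts equation} is the same integral identity regardless of the integrability exponent, and it continues to hold for every test function $g$ because $u,u^{\alpha}\in L^1(\Omega)$ keep all integrals finite. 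Hence $u\in W^{\alpha}_{1}(\Omega)$, and summing the two $L^1$ bounds yields
$$\|u\|_{W^{\alpha}_{1}(\Omega)}=\|u\|_{L^1(\Omega)}+\|u^{\alpha}\|_{L^1(\Omega)}\le (b-a)^{1/2}\big(\|u\|_{L^2(\Omega)}+\|u^{\alpha}\|_{L^2(\Omega)}\big)\le C\,\|u\|_{W^{\alpha}_{2}(\Omega)},$$
for a constant $C$ depending only on $b-a$, which gives the claimed (continuous) embedding.

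There is essentially no hard step here; the only point requiring care is the logical matter that the $L^2$ weak fractional derivative of $u$ is genuinely the same object as its $L^1$ weak fractional derivative. This is immediate once one observes that the weak derivative is uniquely determined by \eqref{riesz parts equation} --- the weight $x^{-1}$ is bounded above and below on $[a,b]$ since $0<a<b$, so the fundamental lemma of the calculus of variations applies --- and that the \emph{finiteness} of the Lebesgue measure of $\Omega$ is precisely what makes the inclusion $L^2(\Omega)\subset L^1(\Omega)$ valid, a step that would fail on an unbounded domain. I would therefore present the argument as: reduce to the scalar inclusion $L^2\subset L^1$ on a finite-measure set, apply it to $u$ and to $u^{\alpha}$, and verify that the weak-derivative identity is preserved.
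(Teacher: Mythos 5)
Your proposal is correct and follows essentially the same route as the paper: both reduce the claim to the inclusion $L^2(\Omega)\subset L^1(\Omega)$ on the bounded interval via H\"older's (Cauchy--Schwarz) inequality, applied separately to $u$ and to $D_{[a,b]}^{\alpha}u$, yielding $\|u\|_{W^{\alpha}_{1}(\Omega)}\leqslant C\|u\|_{W^{\alpha}_{2}(\Omega)}$ with $C$ depending only on $b-a$. Your additional remark that the $L^2$ weak fractional derivative also serves as the $L^1$ weak fractional derivative is a point of rigor the paper passes over silently, but it does not alter the argument.
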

\begin{proof}
For any $u \in W^{\alpha}_{2}(\Omega)$, and using H\"{o}lder's inequality, we deduce
\begin{equation*}
\begin{split}
\|u\|^2_{W^{\alpha}_{1}}(\Omega)=&\left(\int_a^b|u|dx+\int_a^b|D_{[a,b]}^{\alpha}u|dx\right)^2\\
\leqslant&\left[\left(\int_a^b|u|^2dx\right)^{\frac{1}{2}}\left(\int_a^b 1 dx\right)^{\frac{1}{2}}+
\left(\int_a^b|D_{[a,b]}^{\alpha}u|^2dx\right)^{\frac{1}{2}}\left(\int_a^b 1 dx\right)^{\frac{1}{2}}\right]^2\\
=&C\left[\left(\int_a^b|u|^2dx\right)^{\frac{1}{2}}+\left(\int_a^b|D_{[a,b]}^{\alpha}u|^2dx\right)^{\frac{1}{2}}\right]^2\\
\leqslant&C\left[\int_a^b|u|^2dx+\int_a^b|D_{[a,b]}^{\alpha}u|^2dx\right]\\
=&C\|u\|^2_{W^{\alpha}_{2}}(\Omega),
\end{split}
\end{equation*}
where C is only dependent on the size of $\Omega$. We therefore conclude $\|u\|_{W^{\alpha}_{1}}(\Omega)\leqslant C\|u\|_{W^{\alpha}_{2}}(\Omega)$, i.e., $u\in W^{\alpha}_{1}(\Omega)$ or $W^{\alpha}_{2}(\Omega)\subset W^{\alpha}_{1}(\Omega)$.\\
\end{proof}

In a conclusion, we can choose $$X=W^{\alpha}_{2}(\Omega),$$ and total fractional variation as the additional regularization term $R$, i.e.,
\begin{equation}\label{R definition}
R(u)=\lambda \|u\|_{HFTV}=\lambda \int_a^b|D_{[a,b]}^{\alpha}u|dx,
\end{equation}
where $\lambda$ is the regularization parameter.
%
%
\subsection{Theoretical properties of the HFTG prior}\label{sec3.2}
In this subsection, we discuss the well-posedness and the finite dimensional approximation of the posterior distribution arising from the Bayesian inference with HFTG hybrid prior. The proofs are similar as \cite{Z.Yao2016}, thus we omits proofs here.

We assume that the forward operator $G:W^{\alpha}_{2}(\Omega)\rightarrow \mathbb{R}^m$ satisfies the following assumptions as \cite{Stuart2010}:
\begin{assum}\label{forward operator assume} \ \\
(i) for every $\varepsilon > 0$ there exists $M=M(\varepsilon) \in \mathbb{R}$ such that, for all $u \in W^{\alpha}_{2}(\Omega)$,
$$\|G(u)\|_{\Sigma}\leqslant \exp (\varepsilon \|u\|^{2}_{W^{\alpha}_2}(\Omega)+M),$$
(ii) for every $ r>0$, there exists $K=K(r)>0$ such that, for all $u_1,u_2 \in W^{\alpha}_2(\Omega)$ with $\max\left\{\|u_1\|_{W^{\alpha}_2(\Omega)},\|u_2\|_{W^{\alpha}_2(\Omega)}\right\}<r$,$$\big\|G(u_1)-G(u_2)\big\|_{\Sigma}\leqslant K\|u_1-u_2\|_{W^{\alpha}_2(\Omega)}.$$
\end{assum}

The Assumptions \ref{forward operator assume}. about $G$ can derive the bounds and Lipschitz properties of $\Phi$ as Assumptions 2.6 in \cite{Stuart2010}.
We shall show that the HFTG prior is well-behaved, i.e., there is a lemma about the additional regularization term $R$ should holds as following.
\begin{lem}\label{R property}
Let $R:W^{\alpha}_{2}(\Omega)\rightarrow \mathbb{R}^m$ defines as equation (\ref{R definition}). Then $R$ satisfies the followings:\\
$(i)$ For all $u \in W^{\alpha}_{2}(\Omega)$, we have $R(u)\geqslant 0$.\\
$(ii)$ For every $r>0$, there exists $K=K(r)>0$ such that, for all $u \in W^{\alpha}_{2}(\Omega)$ with $\|u\|_{W^{\alpha}_{2}(\Omega)}<r,R(u)\leqslant K$.\\
$(iii)$ For every $r>0$, there exists $L=L(r)>0$ such that, for all $u_1,u_2 \in W^{\alpha}_2(\Omega)$ with $\max\left\{\|u_1\|_{W^{\alpha}_2(\Omega)},\|u_2\|_{W^{\alpha}_2(\Omega)}\right\}<r$,$$|R(u_1)-R(u_2)|\leqslant L\|u_1-u_2\|_{W^{\alpha}_2(\Omega)}.$$
\end{lem}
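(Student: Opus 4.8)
The plan is to recognize that $R$ is, up to the positive constant $\lambda$, simply the $L^1$-seminorm of the Hadamard fractional derivative, namely $R(u)=\lambda\|D_{[a,b]}^{\alpha}u\|_{L^1(\Omega)}$, so that all three assertions reduce to elementary norm estimates once the derivative term is controlled by the full $W^{\alpha}_2(\Omega)$ norm. The two structural facts I would lean on are the continuous embedding $W^{\alpha}_2(\Omega)\subset W^{\alpha}_1(\Omega)$ from Lemma \ref{lem:embedding} and the linearity of the fractional operator from Property \ref{pro:linearity}. Statement $(i)$ is immediate: since the integrand $|D_{[a,b]}^{\alpha}u|$ is non-negative and $\lambda>0$, integrating over $\Omega$ gives $R(u)\geqslant 0$ with no further work.

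For $(ii)$, I would bound $R(u)=\lambda\|D_{[a,b]}^{\alpha}u\|_{L^1(\Omega)}$ by the full $W^{\alpha}_1$ norm, $R(u)\leqslant\lambda\big(\|u\|_{L^1(\Omega)}+\|D_{[a,b]}^{\alpha}u\|_{L^1(\Omega)}\big)=\lambda\|u\|_{W^{\alpha}_1(\Omega)}$, and then invoke the embedding estimate $\|u\|_{W^{\alpha}_1(\Omega)}\leqslant C\|u\|_{W^{\alpha}_2(\Omega)}$ proved in Lemma \ref{lem:embedding}, where $C$ depends only on the size of $\Omega$. This yields $R(u)\leqslant\lambda C\|u\|_{W^{\alpha}_2(\Omega)}<\lambda C r$, so one may take $K(r)=\lambda C r$. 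Equivalently, a direct application of the Cauchy--Schwarz inequality gives $\|D_{[a,b]}^{\alpha}u\|_{L^1}\leqslant\sqrt{b-a}\,\|D_{[a,b]}^{\alpha}u\|_{L^2}\leqslant\sqrt{b-a}\,\|u\|_{W^{\alpha}_2(\Omega)}$, which serves the same purpose.

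For $(iii)$, the crucial observation is that, by the linearity of $D_{[a,b]}^{\alpha}$ in Property \ref{pro:linearity}, one has $D_{[a,b]}^{\alpha}u_1-D_{[a,b]}^{\alpha}u_2=D_{[a,b]}^{\alpha}(u_1-u_2)$, so that the reverse triangle inequality for the $L^1$ norm gives
$$|R(u_1)-R(u_2)|=\lambda\big|\,\|D_{[a,b]}^{\alpha}u_1\|_{L^1}-\|D_{[a,b]}^{\alpha}u_2\|_{L^1}\,\big|\leqslant\lambda\|D_{[a,b]}^{\alpha}(u_1-u_2)\|_{L^1(\Omega)}.$$
Applying the estimate from $(ii)$ to $u_1-u_2$ then produces $|R(u_1)-R(u_2)|\leqslant\lambda C\|u_1-u_2\|_{W^{\alpha}_2(\Omega)}$, so $L(r)=\lambda C$ works (indeed uniformly in $r$). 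The argument is essentially routine; the only point requiring minor care is the passage in $(iii)$ from the difference of $L^1$ norms to the $L^1$ norm of the difference, which relies on combining the reverse triangle inequality with the linearity of the Hadamard fractional derivative so that the derivative of $u_1-u_2$ appears cleanly.
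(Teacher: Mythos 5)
Your proof is correct and follows essentially the same route as the paper's: bound $R$ by the $W^{\alpha}_1(\Omega)$ norm and invoke the embedding $W^{\alpha}_2(\Omega)\subset W^{\alpha}_1(\Omega)$ from Lemma \ref{lem:embedding}, taking $K(r)=\lambda C r$ and $L(r)=\lambda C$. In fact you are slightly more careful than the paper in part $(iii)$, where the paper writes $|R(u_1)-R(u_2)|=\lambda\|u_1-u_2\|_{HFTV}$ with an equality that should really be the inequality you derive from linearity of $D^{\alpha}_{[a,b]}$ and the reverse triangle inequality.
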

\begin{proof}
$(i)$: This property is trivial.\\
$(ii)$: For all $u \in W^{\alpha}_{2}(\Omega)$ and given $r>0$ with $\|u\|_{W^{\alpha}_{2}(\Omega)}<r$, from the Lemma \ref{lem:embedding}, there exists a constant $C>0$ such that $$R(u)=\lambda \|u\|_{HFTV}\leqslant \lambda \|u\|_{W^{\alpha}_1(\Omega)}\leqslant \lambda C \|u\|_{W^{\alpha}_2(\Omega)}\leqslant \lambda C r.$$
So, we can choose $K(r)=\lambda C r$.\\
$(iii)$: For every $r>0$ and all $u_1,u_2 \in W^{\alpha}_2(\Omega)$, there is constant $C>0$ such that $$|R(u_1)-R(u_2)|=\lambda\|u_1-u_2\|_{HFTV}\leqslant \lambda \|u_1-u_2\|_{W^{\alpha}_1(\Omega)}\leqslant \lambda C \|u_1-u_2\|_{W^{\alpha}_2(\Omega)}.$$
When we choose $L(r)=\lambda C$, $(iii)$ is proved. This proof is complete.\\
\end{proof}

If $\Phi$ satisfies Assumptions 2.6 in \cite{Stuart2010} and Lemma \ref{R property} holds respect to $R$, we can obtain that $\Phi+R$ satisfies Assumptions 2.6 in \cite{Stuart2010}. As a conclusion, the probability measure $\mu^{y}$ given by equation (\ref{hybridR-N}) is well defined on $W^{\alpha}_2(\Omega)$ and it is Lipschitz in the data $y$ with respect to the Hellinger metric as following theorem.
\begin{thm}\label{welldefined}
Let forward operator $G:W^{\alpha}_{2}(\Omega)\rightarrow \mathbb{R}^m$ satisfies Lemma \ref{R property} and $R:W^{\alpha}_{2}(\Omega)\rightarrow \mathbb{R}^m$ is defined as (\ref{R definition}). For a given $y\in \mathbb{R}^m$,  $\mu^{y}$ is given by equation (\ref{hybridR-N}). Then we have the following:\\
$(i)$ $\mu^{y}$ defined as equation (\ref{hybridR-N}) is well defined on $W^{\alpha}_2(\Omega)$.\\
$(ii)$ $\mu^{y}$ is Lipschitz in the data $y$ with respect to the Hellinger metric. specifically, if $\mu^{y}$ and $\mu^{y'}$ are two measures corresponding to data $y$ and $y'$ respectively, then for every $r>0$, there exists $C=C(r)>0$ such that for all $y$, $y'\in \mathbb{R}^m$ with $\max\left\{\|y\|_{W^{\alpha}_{2}(\Omega)},\|y'\|_{W^{\alpha}_{2}(\Omega)}\right\}<r$, we have $$d_{Hell}(\mu^{y},\mu^{y'}) \leqslant C \|y-y'\|_{\Sigma}.$$ Consequently the expectation of any polynomially bounded function $f:W^{\alpha}_{2}(\Omega) \to E$ is
continuous in $y$.
Where, $E$ is the Cameron-Martin space of the Gaussian measure $\mu_{0}$ (see \cite{Stuart2010,Stuart2015}),
and the Hellinger metric with respect to meaasure $\mu$ and $\mu'$ is defined by
$$d_{Hell}(\mu, \mu')=\sqrt{\frac12\int_{\Omega}\big(\sqrt{\frac{d\mu}{d\nu}}-\sqrt{\frac{d\mu'}{d\nu}}\big)^2 d\nu}.$$
\end{thm}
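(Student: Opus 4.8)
The plan is to reduce the statement to the abstract well-posedness theory of \cite{Stuart2010} by verifying that the combined potential $\Psi(u;y):=\Phi(u)+R(u)$ satisfies Assumptions 2.6 of \cite{Stuart2010}, and then invoking the standard well-posedness results there on the separable Hilbert space $X=W^{\alpha}_2(\Omega)$, exactly as adapted in \cite{Z.Yao2016}. Two ingredients are already in hand: first, that $\Phi$ alone satisfies Assumptions 2.6 (this is the text's observation that Assumption \ref{forward operator assume} on $G$ yields the required bounds and Lipschitz estimates on $\Phi$); and second, Lemma \ref{R property}, which supplies nonnegativity, local boundedness and local Lipschitz continuity of $R$. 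The separability of $W^{\alpha}_2(\Omega)$ established in Lemma \ref{separablespace} is what makes Gaussian measure theory (in particular Fernique's theorem) available on this nonstandard fractional space, and it is the reason separability was proved beforehand.

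First I would check the four conditions of Assumptions 2.6 for $\Psi=\Phi+R$, exploiting additivity. For the lower bound, since $R\ge 0$ by Lemma \ref{R property}$(i)$, one has $\Psi(u;y)\ge \Phi(u;y)\ge M(\varepsilon,r)-\varepsilon\|u\|^2_{W^{\alpha}_2(\Omega)}$, so the bound for $\Phi$ passes to $\Psi$ unchanged. For the local upper bound on $\{\|u\|_{W^{\alpha}_2(\Omega)}<r,\ \|y\|<r\}$, I add the bound for $\Phi$ to $K(r)$ from Lemma \ref{R property}$(ii)$. For local Lipschitz continuity in $u$, I use the triangle inequality together with the $u$-Lipschitz bound for $\Phi$ and Lemma \ref{R property}$(iii)$. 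Finally, for Lipschitz continuity in $y$, I note that $R$ is independent of $y$, so $|\Psi(u;y)-\Psi(u;y')|=|\Phi(u;y)-\Phi(u;y')|$ and the estimate is inherited verbatim from $\Phi$.

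For part $(i)$ I would show the normalization constant $Z(y)=\int_{X}\exp(-\Psi(u;y))\,d\mu_0(u)$ lies in $(0,\infty)$. Finiteness follows from the lower bound: $\exp(-\Psi)\le e^{-M}\exp(\varepsilon\|u\|^2_{W^{\alpha}_2(\Omega)})$, and by Fernique's theorem on the separable space $W^{\alpha}_2(\Omega)$ there is $\beta>0$ with $\int\exp(\beta\|u\|^2)\,d\mu_0<\infty$, so choosing $\varepsilon<\beta$ gives $Z(y)<\infty$. Positivity follows from the local upper bound: on any ball $\{\|u\|_{W^{\alpha}_2(\Omega)}<r\}$ the integrand is bounded below by a positive constant, and a Gaussian measure assigns positive mass to that ball, whence $Z(y)>0$; the same estimate yields a lower bound on $Z(y)$ that is uniform over $\|y\|<r$, which is needed below.

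For part $(ii)$ I would run the standard Hellinger estimate. Writing $\tfrac{d\mu^y}{d\mu_0}=Z(y)^{-1}\exp(-\Psi(u;y))$ and expanding $2\,d_{Hell}(\mu^y,\mu^{y'})^2$, I would split the difference into a term controlling the variation of the exponent at fixed normalization and a term controlling the variation of $Z(y)^{-1/2}$. The first is bounded using the elementary inequality $|e^{-a/2}-e^{-b/2}|\le\tfrac12\max\{e^{-a/2},e^{-b/2}\}\,|a-b|$ together with the $y$-Lipschitz bound on $\Phi$ and Fernique integrability; the second uses $|Z(y)^{-1/2}-Z(y')^{-1/2}|$ controlled by $|Z(y)-Z(y')|$ and the uniform lower bound on $Z$, with $|Z(y)-Z(y')|$ again estimated by the $y$-Lipschitz bound and Fernique. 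Combining yields $d_{Hell}(\mu^y,\mu^{y'})\le C(r)\|y-y'\|_{\Sigma}$, and the continuity in $y$ of expectations of polynomially bounded $f:W^{\alpha}_2(\Omega)\to E$ then follows from the standard bound $\|\mathbb{E}^{\mu^y}f-\mathbb{E}^{\mu^{y'}}f\|_E\le C\,d_{Hell}(\mu^y,\mu^{y'})$ (cf. \cite{Stuart2010}). The main obstacle is not any single estimate but ensuring the Gaussian integrability hypotheses genuinely hold on the fractional Sobolev space; this is precisely where separability (Lemma \ref{separablespace}) and the embedding $W^{\alpha}_2(\Omega)\subset W^{\alpha}_1(\Omega)$ (Lemma \ref{lem:embedding}, used inside Lemma \ref{R property}) do the essential work, after which the argument is the routine machinery of \cite{Stuart2010,Z.Yao2016}.
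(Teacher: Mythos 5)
Your proposal is correct and follows essentially the same route as the paper: the paper explicitly states that the theorem is a direct consequence of $\Phi+R$ satisfying Assumptions 2.6 of \cite{Stuart2010} (obtained by combining the bounds on $\Phi$ from Assumption \ref{forward operator assume} with Lemma \ref{R property}) and omits the proof, deferring to the standard machinery of \cite{Stuart2010,Z.Yao2016}. Your write-up simply fills in the details the paper leaves implicit (the Fernique/normalization-constant argument and the Hellinger estimate), with no substantive deviation.
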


The theorem as above is direct consequence of the fact that $\Phi+R$ satisfies the assumption (2.6) in \cite{Stuart2010} and so we omit the proof here. So far, theoretically, we have shown the validity of the HFTG prior. Next we will study the feasibility of the HFTG prior numerically and the finite dimensional approximation of posterior measure $\mu^y$ is essential. In particular, we consider the following approximation in \cite{Z.Yao2016}:
\begin{equation}\label{approximation 1}
\frac{d\mu^{y}_{N_1,N_2}}{d\mu_0} \propto \exp (-\Phi_{N_1}(u)-R_{N_2}(u)),
\end{equation}
where, $\Phi_{N_1}(u)$ is a $N_1$ dimensional approximation of $\Phi(u)$ with $G_{N_1}$ being the $N_1$ dimensional approximation of forward operator $G$ and $R_{N_2}(u)$ is a $N_2$ dimensional approximation of $R(u)$. Then we can establish a approximation of posterior distribution $\mu^y$ to $\mu^y_{N_1,N_2}$ with respect to Hellinger metric. Since the proofs of the following Theorem \ref{approximationtheorem} and Corollary \ref{approximationcor} are similar as the ones in \cite{Z.Yao2016}. To make this paper self-contained, we give their proofs in the Appendix.
\begin{thm}\label{approximationtheorem}
Assume that $G$ and $G_{N_1}$ satisfy assumption \ref{forward operator assume} (i) with constants uniform in $N_1$, and $R$ and $R_{N_2}$ satisfy Lemma \ref{R property} (i) and (ii) with constants uniform in $N_2$. Assume further for all $\varepsilon>0$, there exist two sequences $\{a_{N_1}(\varepsilon)\}>0$ and  $\{b_{N_2}(\varepsilon)\}>0$ which are both converge to zero, such that $\mu_0(X_{\varepsilon}) \geqslant 1-\epsilon$, for all $N_1, N_2$,
\begin{equation}
X_{\varepsilon}=\left\{u \in W^{\alpha}_{2}(\Omega)\big||\Phi(u)-\Phi_{N_1}(u)| \leqslant {a_{N_1}(\varepsilon)},|R(u)-R_{N_2}(u)| \leqslant {b_{N_2}(\varepsilon)}\right\},
\end{equation}
Then we can obtain
\begin{equation}
d_{Hell}(\mu^{y},\mu^{y}_{N_1,N_2}) \to 0,\  as\  N_1,N_2 \to +\infty.
\end{equation}
\end{thm}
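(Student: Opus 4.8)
The plan is to follow the normalizing-constant strategy used in \cite{Stuart2010,Z.Yao2016}, controlling the Hellinger distance through the difference of the unnormalized densities and the difference of the normalizing constants. Abbreviating $N=(N_1,N_2)$, set
$$Z=\int_{W^{\alpha}_{2}(\Omega)}e^{-\Phi(u)-R(u)}\,d\mu_0(u),\qquad Z_N=\int_{W^{\alpha}_{2}(\Omega)}e^{-\Phi_{N_1}(u)-R_{N_2}(u)}\,d\mu_0(u),$$
and take $\mu_0$ as the common reference measure in the definition of $d_{Hell}$. First I would record that all four functionals are nonnegative: $\Phi,\Phi_{N_1}\geqslant 0$ since they are squared norms, and $R,R_{N_2}\geqslant 0$ by Lemma \ref{R property}$(i)$ (uniformly in $N_2$), so each exponential factor lies in $(0,1]$. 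Using the growth bound Assumption \ref{forward operator assume}$(i)$ for $G,G_{N_1}$ and the boundedness Lemma \ref{R property}$(ii)$ for $R,R_{N_2}$, both with constants uniform in $N$, I would show $\Phi+R$ and $\Phi_{N_1}+R_{N_2}$ are bounded above on any ball of $W^{\alpha}_2(\Omega)$ and hence $Z,Z_N\geqslant c>0$ for a constant $c$ independent of $N$; this uniform positive lower bound is what keeps $Z^{-1}$ and the prefactors under control.

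Next I would decompose $2\,d_{Hell}(\mu^{y},\mu^{y}_{N})^2$ by inserting the cross term $Z^{-1/2}e^{-\frac12(\Phi_{N_1}+R_{N_2})}$ and applying $(a+b)^2\leqslant 2a^2+2b^2$, giving $d_{Hell}^2\leqslant I_1+I_2$ with
$$I_1=\frac{2}{Z}\int\Big(e^{-\frac12(\Phi+R)}-e^{-\frac12(\Phi_{N_1}+R_{N_2})}\Big)^2 d\mu_0,\qquad I_2=2Z_N\Big|Z^{-1/2}-Z_N^{-1/2}\Big|^2.$$
For $I_1$ I would use the elementary inequality $|e^{-s/2}-e^{-t/2}|\leqslant \tfrac12|s-t|$ for $s,t\geqslant 0$, so the integrand is at most $\tfrac14(|\Phi-\Phi_{N_1}|+|R-R_{N_2}|)^2$, and then split the integral over $X_{\varepsilon}$ and its complement. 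On $X_{\varepsilon}$ the differences are controlled by $a_{N_1}(\varepsilon)$ and $b_{N_2}(\varepsilon)$ while $\mu_0(X_\varepsilon)\leqslant 1$; on $X_{\varepsilon}^c$ the integrand is bounded by $1$ (difference of quantities in $[0,1]$) while $\mu_0(X_\varepsilon^c)\leqslant \varepsilon$. This yields $I_1\leqslant \frac{2}{c}\big(\tfrac14(a_{N_1}(\varepsilon)+b_{N_2}(\varepsilon))^2+\varepsilon\big)$.

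For $I_2$ I would first estimate the normalization difference by the same Lipschitz-and-splitting argument applied to $|e^{-s}-e^{-t}|\leqslant |s-t|$, obtaining $|Z-Z_N|\leqslant (a_{N_1}(\varepsilon)+b_{N_2}(\varepsilon))+\varepsilon$. Combining this with the mean value theorem for $x\mapsto x^{-1/2}$ and the uniform lower bound $Z,Z_N\geqslant c$ gives $I_2\leqslant C(c)\,|Z-Z_N|^2$, again controlled by the same two small quantities. Finally I would take the limits in the correct order: for fixed $\varepsilon$, sending $N_1,N_2\to\infty$ annihilates the $a_{N_1}(\varepsilon),b_{N_2}(\varepsilon)$ terms, leaving $\limsup_{N\to\infty}d_{Hell}^2\leqslant C'\varepsilon$ with $C'$ independent of $\varepsilon$; since $\varepsilon>0$ is arbitrary this forces $d_{Hell}(\mu^{y},\mu^{y}_{N})\to 0$.

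The routine parts are the two exponential inequalities and the two-region split. The step that needs genuine care is the uniform-in-$N$ positive lower bound on the normalizing constants $Z_N$, together with the uniformity of the growth and boundedness estimates for $G_{N_1}$ and $R_{N_2}$: without these uniformities the constants multiplying $\varepsilon$ could degenerate as $N\to\infty$ and the final $\limsup$ argument would collapse. That is where I would concentrate the effort, invoking precisely the hypotheses that Assumption \ref{forward operator assume}$(i)$ and Lemma \ref{R property}$(i)$--$(ii)$ hold with constants independent of $N_1$ and $N_2$.
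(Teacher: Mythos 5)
Your proposal is correct and follows essentially the same route as the paper's Appendix proof: a uniform-in-$N$ positive lower bound on the normalizing constants obtained by restricting to a ball where $\Phi+R$ is bounded, the estimate $|Z-Z_{N_1,N_2}|\leqslant \varepsilon+a_{N_1}(\varepsilon)+b_{N_2}(\varepsilon)$ via the split over $X_\varepsilon$ and its complement, a decomposition of the squared Hellinger distance into an unnormalized-density term plus a normalization term, and the limit order $N\to\infty$ first, then $\varepsilon\to 0$. The only cosmetic difference is that you use a two-term decomposition and perform the $X_\varepsilon$ split inside the density term, whereas the paper splits off the complement region as a separate third term $I_1$ at the top level; the estimates and conclusions are identical.
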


Noting that $W^{\alpha}_{2}(\Omega)$ is a separable Hilbert space, hence we can show the finite dimensional approximation of $\mu^y$ to $\mu^y_N$ without any additional assumptions, as following:
\begin{cor}\label{approximationcor}
Let $\{e_k\}_{k=1}^{\infty}$ be a complete orthogonal basis of $W^{\alpha,\psi}_{2}(\Omega)$. For all $N\in \mathbb{N}$, we define
\begin{equation}\label{approximationcor 1}
u_{N}=\sum_{k=1}^N \langle u,e_{k}\rangle e_{k}
\end{equation}
and
\begin{equation}\label{approximationcor 2}
\frac{d\mu^{y}_{N}}{d\mu_0}=\exp \left(-\Phi(u_{N})-R(u_{N})\right).
\end{equation}
Assume $G$ satisfies Assumption \ref{forward operator assume} and $R$ is defined by (\ref{R definition}), then
\begin{equation}\label{approximationcor 3}
d_{Hell}(\mu^{y},\mu^{y}_{N}) \to 0,\ as \ N \to \infty.
\end{equation}
\end{cor}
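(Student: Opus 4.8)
The plan is to deduce the corollary directly from Theorem \ref{approximationtheorem} by taking $N_1=N_2=N$ and setting $\Phi_N(u):=\Phi(u_N)$ and $R_N(u):=R(u_N)$, where $u_N$ is the orthogonal projection onto $\mathrm{span}\{e_1,\dots,e_N\}$ defined in \eqref{approximationcor 1}. The whole reason no extra hypotheses are needed is that we work in the separable Hilbert space $W^{\alpha}_2(\Omega)$: because $\{e_k\}$ is a complete orthonormal basis, Parseval's identity gives $\|u-u_N\|_{W^{\alpha}_2(\Omega)}\to 0$ as $N\to\infty$ for every $u$, and the orthogonal projection is norm non-increasing, so $\|u_N\|_{W^{\alpha}_2(\Omega)}\leq\|u\|_{W^{\alpha}_2(\Omega)}$. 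These two facts drive the argument.

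First I would verify the uniform bounds required by Theorem \ref{approximationtheorem}. For the forward map, since $\|u_N\|\leq\|u\|$, Assumption \ref{forward operator assume}(i) yields $\|G(u_N)\|_{\Sigma}\leq\exp(\varepsilon\|u_N\|^2_{W^{\alpha}_2(\Omega)}+M)\leq\exp(\varepsilon\|u\|^2_{W^{\alpha}_2(\Omega)}+M)$, so $G$ and $G_N$ satisfy (i) with the same constant $M(\varepsilon)$, uniformly in $N$. Likewise, Lemma \ref{R property}(i) gives $R(u_N)\geq 0$, while Lemma \ref{R property}(ii) together with the embedding Lemma \ref{lem:embedding} gives $R(u_N)\leq\lambda C\|u_N\|_{W^{\alpha}_2(\Omega)}\leq\lambda C r$ whenever $\|u\|_{W^{\alpha}_2(\Omega)}<r$; hence $R$ and $R_N$ satisfy (i)--(ii) with constants uniform in $N$.

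The main step is to produce deterministic sequences $a_N(\varepsilon),b_N(\varepsilon)\to 0$ for which the set $X_{\varepsilon}$ has $\mu_0$-measure at least $1-\varepsilon$. Using that $\Phi$ is locally Lipschitz (as recorded right after Assumption \ref{forward operator assume}) and that $R$ is locally Lipschitz by Lemma \ref{R property}(iii), the strong convergence $u_N\to u$ forces the pointwise convergence $\Phi(u_N)\to\Phi(u)$ and $R(u_N)\to R(u)$ for $\mu_0$-almost every $u$ (indeed for every $u$). Since $\mu_0$ is a probability, hence finite, measure and these functions are measurable, Egorov's theorem supplies, for each $\varepsilon>0$, a set $A_{\varepsilon}$ with $\mu_0(A_{\varepsilon})\geq 1-\varepsilon$ on which both convergences are uniform. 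I would then set $a_N(\varepsilon):=\sup_{u\in A_{\varepsilon}}|\Phi(u)-\Phi(u_N)|$ and $b_N(\varepsilon):=\sup_{u\in A_{\varepsilon}}|R(u)-R(u_N)|$; uniform convergence makes both sequences tend to zero, and by construction $A_{\varepsilon}\subseteq X_{\varepsilon}$, so $\mu_0(X_{\varepsilon})\geq 1-\varepsilon$ for every $N$.

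With all hypotheses of Theorem \ref{approximationtheorem} verified, its conclusion gives $d_{Hell}(\mu^{y},\mu^{y}_{N})\to 0$ as $N\to\infty$, which is the claim. The only delicate point is the passage from almost-everywhere convergence of $\Phi(u_N)$ and $R(u_N)$ to the deterministic, uniformly small bounds $a_N,b_N$: this is exactly where the finiteness of $\mu_0$ and Egorov's theorem are essential, since pointwise convergence alone does not furnish a single sequence controlling the approximation error on a set of large measure. Everything else is a routine consequence of the Hilbert-space projection structure.
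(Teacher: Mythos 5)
Your argument is correct and reduces the corollary to Theorem \ref{approximationtheorem} exactly as the paper does (same choices $\Phi_N(u)=\Phi(u_N)$, $R_N(u)=R(u_N)$, same use of the local Lipschitz properties of $\Phi$ and $R$), but the key measure-theoretic step is handled by a genuinely different device. The paper constructs the sequences quantitatively: it sets $a_N=\mathbb{E}\|u-u_N\|_X^2=\sum_{k>N}\mathbb{E}|\langle u,e_k\rangle|^2$, uses that the covariance $\mathcal{C}_0$ is trace class to get $a_N\to 0$, applies Markov's inequality to obtain $\mu_0\big(\|u-u_N\|_X>\sqrt{2a_N/\varepsilon}\big)\le\tfrac12\varepsilon$, intersects with a ball $\{\|u\|_X\le r_\varepsilon\}$ of measure at least $1-\tfrac12\varepsilon$, and then reads off the explicit rates $L^\Phi_\varepsilon\sqrt{2a_N/\varepsilon}$ and $L^R_\varepsilon\sqrt{2a_N/\varepsilon}$ from the Lipschitz bounds. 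You instead establish pointwise convergence $\Phi(u_N)\to\Phi(u)$ and $R(u_N)\to R(u)$ for every $u$ (Parseval plus the same Lipschitz properties) and upgrade it to uniform convergence on a set of measure $\ge 1-\varepsilon$ via Egorov's theorem. Both routes are valid; the paper's yields explicit rates and makes visible where the reference measure's finite second moment (trace-class covariance) enters, whereas yours uses only that $\mu_0$ is a finite measure and is in that sense more elementary and slightly more general. Two small points to tidy in your version: since the Egorov set $A_\varepsilon$ need not be bounded, $a_N(\varepsilon)=\sup_{u\in A_\varepsilon}|\Phi(u)-\Phi(u_N)|$ could be infinite for finitely many small $N$, which technically violates the hypothesis that $\{a_N(\varepsilon)\}$ be a positive sequence tending to zero; intersecting $A_\varepsilon$ with a ball $\{\|u\|_X\le r_\varepsilon\}$ (at the cost of splitting $\varepsilon$), as the paper does, removes this. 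On the other hand, your explicit verification of the uniform-in-$N$ bounds via $\|u_N\|_X\le\|u\|_X$ is a step the paper leaves implicit, and is a welcome addition.
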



\subsection{A discretization of the Hadamard fractional derivative}\label{sec3.3}
In this subsection, we discuss the behavior of this HFTG prior in numerically. In the numerical performance, the approximation of Hadamard fractional derivative is essential, and we present a discretization for the Hadamard fractional derivative based on the relationship between the classical Riemann-Liouville and Hadamard fractional derivative as following.
\begin{rem}\label{general classical}
In \cite{Mohamed2017}, we can know that there is a relationship between the classical Riemann-Liouville fractional derivative for left
\begin{equation}
^{RL}\! D^{\alpha}_{[a,x]}f(x):=\frac{1}{\Gamma(n-\alpha)}\left(\frac{d}{dx}\right)^n\int_a^x\frac{f(t)dt}{(x-t)^{\alpha-n+1}},
\end{equation}
and the general Riemann-Liouville fractional derivative for left
\begin{equation}\label{general}
D^{\alpha,\psi}_{[a,x]}f(x):=
\frac{1}{\Gamma(n-\alpha)}\left(\frac{1}{\psi'(x)}\frac{d}{dx}\right)^n\int_a^x \frac{\psi'(t)f(t)dt}{(\psi(x)-\psi(t))^{\alpha-n+1}},
\end{equation}
which is described as following:
$$D_{[a,x]}^{\alpha,\psi}f(x)=D_{[a,x]}^{\alpha,\psi}f(\psi^{-1}(s))={^{RL}}\!D_{[\psi(a),s]}^{\alpha}(f\circ\psi^{-1})(s),\ as\  x=\psi^{-1}(s), s\in[\psi(a),\psi(b)].$$
The right fractional derivative has the similar relationship as the left:
$$D_{[x,b]}^{\alpha,\psi}f(x)=D_{[x,b]}^{\alpha,\psi}f(\psi^{-1}(s))={^{RL}}\!D_{[s,\psi(b)]}^{\alpha}(f\circ\psi^{-1})(s),\ as\  x=\psi^{-1}(s), s\in[\psi(a),\psi(b)].$$
Then,
\begin{equation}\label{relationRL}
D_{[a,b]}^{\alpha,\psi}f(x)=D_{[a,b]}^{\alpha,\psi}f(\psi^{-1}(s))={^{RL}}\!D_{[\psi(a),\psi(b)]}^{\alpha}(f\circ\psi^{-1})(s).
\end{equation}
When $\psi(x)=\ln(x)$ in equation \eqref{general}, it will become the Hadamard fractional derivative.
\end{rem}

Similar as the approximate approach of classical Riemann-Liouville fractional derivative in \cite{Q.Yang2010}, based on the relationship, the Hadamard fractional derivative can be approximated by followings. Let $s_{j}=\psi(a)+jh, j=0,1,\dots,N$ with $x_{j}=\psi^{-1}(s_{j})$ and the step size $h=\frac{\psi(b)-\psi(a)}{N}$, and let $f_{j}$ be the approximation to $(f\circ\psi^{-1})(s_{j})$. When $0<\alpha< 1$ and $n=1$, the general Riemann-Liouville fractional derivative is approximated as follows:
$$D_{[a,x]}^{\alpha,\psi}f(x_l)={^{RL}}\!D_{[\psi(a),s]}^{\alpha}(f\circ\psi^{-1})(s_{l})\thickapprox\frac{1}{h^{\alpha}}\sum_{j=0}^l \omega_jf_{l-j},$$
and$$D_{[x,b]}^{\alpha,\psi}f(x_l)={^{RL}}\!D_{[s,\psi(b)]}^{\alpha}(f\circ\psi^{-1})(s_{l})\thickapprox\frac{1}{h^{\alpha}}\sum_{j=0}^{N-l} \omega_jf_{l+j},$$
thus,
\begin{equation}\label{priordiscret1}
D_{[a,b]}^{\alpha,\psi}u(x_l)={^{RL}}\!D_{[\psi(a),\psi(b)]}^{\alpha}(f\circ\psi^{-1})(s_{l})\thickapprox\frac{1}{2h^{\alpha}}\left(\sum_{j=0}^l \omega_jf_{l-j}-\sum_{j=0}^{N-l} \omega_jf_{l+j}\right).
\end{equation}
When $1<\alpha< 2$ and $n=2$, the general Riemann-Liouville fractional derivative is approximated as follows:
$$D_{[a,x]}^{\alpha,\psi}f(x_l)={^{RL}}\!D_{[\psi(a),s]}^{\alpha}(f\circ\psi^{-1})(s_{l})\thickapprox\frac{1}{h^{\alpha}}\sum_{j=0}^{l+1} \omega_jf_{l-j+1},$$
and$$D_{[x,b]}^{\alpha,\psi}f(x_l)={^{RL}}\!D_{[s,\psi(b)]}^{\alpha}(f\circ\psi^{-1})(s_{l})\thickapprox\frac{1}{h^{\alpha}}\sum_{j=0}^{N-l+1} \omega_jf_{l+j-1},$$
therefore,
\begin{equation}\label{priordiscret2}
D_{[a,b]}^{\alpha,\psi}u(x_l)={^{RL}}\!D_{[\psi(a),\psi(b)]}^{\alpha}(f\circ\psi^{-1})(s_{l})\thickapprox\frac{1}{2h^{\alpha}}\left(\sum_{j=0}^{l+1} \omega_jf_{l-j+1}+\sum_{j=0}^{N-l+1} \omega_jf_{l+j-1}\right).
\end{equation}
Where $l=1,2,\dots,N-1$, $\omega_{0}=1,\omega_{j}=(-1)^j\frac{\alpha(\alpha-1)\dots(\alpha-j+1)}{j!}$, for $j=1,2,\dots,N$. In fact, the coefficient has recursion formula as following:
\[\omega_{0}=1, \ \omega_{j}=\left(1-\frac{1+\alpha}{j}\right)\omega_{j-1}\ \text{for} \ j>0.\]
When we take the $\psi(x)=\ln(x)$ in equation \eqref{general}, it will become the Hadamard fractional derivative. Meanwhile, specify the $\psi(x)=\ln(x)$ in approximate scheme of the general Riemann-Liouville fractional derivative, we can obtain the approximation for the Hadamard fractional derivative.

%
%
\section{Numerical algorithm and experiments}\label{sec4}
This section we will discuss the numerical implementation method of the Bayesian inference with respect to HFTG prior to capture the information from the posterior measure and the applications of this prior numerically.
%
%
\subsection{pCN algorithm}\label{sec4.1}
In general it is hard to obtain information from a probability measure in
high dimensions. One useful approach to extracting information is to find
a maximum a posteriori estimator, or MAP estimator, the other commonly used method for interrogating a probability measure in high dimensions is sampling. Markov chain Monte Carlo (MCMC) methods are widely used sampling methods in the Bayesian inference.
In the algorithm implementation, we use the preconditioned Crank-Nicolson (pCN) MCMC algorithm to drawn samples from the posterior distribution $\mu^y$ given by equation \eqref{hybridR-N}, which developed in \cite{Stuart2015,Stuart2010,Stuart2013}, due to its dimension-independent properties. Therefore, for high dimensions, pCN algorithm provides a more robust and efficient technique than the standard MCMC approaches. Following is a brief introduction to pCN algorithms:

Give the propose by
\begin{equation}\label{propose}
v=\sqrt{1-\beta^2}u+\beta w,
\end{equation}
where $v$ is the next propose situation, $u$ is the current situation, $\beta$ is a positive constant, and $w\thicksim \mathcal{N}(0,\mathcal{C}_0)$.

The associated acceptance probability is given by
\begin{equation}\label{acceptance}
a(u,v)=\min\{1,\exp[\Phi(u)+R(u)-\Phi(v)-R(v)]\}.
\end{equation}
The Algorithm \ref{algorithm} is the detailed description of the pCN algorithm process.
\begin{algorithm}[ht]
\caption{The preconditioned Crank-Nicolson (pCN) Algorithm}
\label{algorithm}
\begin{algorithmic}[1]
\State {Initialize $u^{(0)}\in W^{\alpha}_{2}(\Omega)$;}
\For{$i=0$ to $n$}
	\State {Propose $v^{(i)}=\sqrt{1-\beta^2}u^{(i)}+\beta w^{(i)}, w^{(i)}\thicksim \mu_{0}$;}
	\State{Draw $\theta \thicksim U[0,1]$}
	\If{$\theta\leqslant a(u^{(i)},v^{(i)})$}
		\State {$u^{(i+1)}=v^{(i)}$;}
	\Else
		\State {$u^{(i+1)}=u^{(i)}$;}
	\EndIf
\EndFor
\end{algorithmic}
\end{algorithm}

%
%
\subsection{Numerical Examples}\label{sec4.2}
In this subsection, we show some numerical results obtained by the HFTG prior for three types examples, two of which are linear problem, deconvolution problem and inverse source identification problems, and the other is nonlinear problem, the parameter identification by interior measurements. Then the salient and promising features of the HFTG prior can be illustrated from the results. All the numerical simulations are interest in $0<\alpha<1$ and $1<\alpha<2$, i.e., $n=1$ or $n=2$ in the definitions of fractional derivatives. Furthermore, we only consider four numerical results for the fractional fractional order $\alpha=0.1,\  0.9,\  1.1$, and $1.9$, and compare them to the results of TG prior. Here, the regularization parameter $\lambda$ is manually chosen so that we obtain the optimal inversion results. Besides, averaging the estimate results of many times running, may reduce the erroneous influence which randomness brings.

%
%
\subsubsection{A Deconvolution Problem}\label{sec4.2.1}
The first problem is a simple deconvolution problem in image processing problem as \cite{Vogel2002}. Consider the Fredholm first kind integral equation of convolution type:
\begin{equation}\label{convelution}
g(x)=\int_{\Omega}k(x-x')f(x')dx'\overset{def}=(\mathcal{K}f)(x), \ x\in\Omega,
\end{equation}
here $g$ represents the blurred image, $f$ represents source term. The kernel $k$ is given by following Gaussian kernel,
\begin{equation}
k(x)=C \exp(-x^2/2r^2),
\end{equation}
where $C$ and $r$ are positive parameters with $C=1/(r\sqrt{2\pi})$.

The associated inverse problem is as following: Given the kernel $k$ and the blurred image $g$, determine the source $f$. In this example, given $\Omega=[1,2]$, the source $f$ is defined by
$$ f(x)=\left\{
\begin{array}{ll}
-16(x-1)(x-1.5),  &{1\leqslant x\leqslant1.5;}\\
0.5,  &{1.7\leqslant x \leqslant 1.9;}\\
0, &{otherwise.}
\end{array} \right. $$
We can simply  discretize equation \eqref{convelution} to obtain a discrete linear system $Kf=d$ by using left rectangle formula on a uniform grid in $s=\ln(x)$ with $N=100$, $s_{j}=jh, j=0,1,\dots,N$ with $x_{j}=\exp(s_{j})$ as section \ref{sec3.3}, and the $K$ has entries
$$[K]_{ij}=h C\exp\left(-\frac{(x_{i}-x_{j})^2}{2r^2}\right),1\leq i,j\leq N,$$
here, fixed $r=0.03$. The noisy measured data $y$ are generated by
\begin{equation}\label{data}
y=Kf+\eta,
\end{equation}
where $\eta$ is the Gaussian random vector with a zero mean and 0.01 standard deviation.

Specifically, we choose the reference Gaussian prior to be $\mathcal{N}(0,\mathcal{C}_{0})$ and the covariance operator $\mathcal{C}_{0}$ is given by
\begin{equation}\label{cov}
c_{0}(x_1,x_2)=\gamma\exp\left[-\frac{1}{2}\left(\frac{x_1-x_2}{d}\right)^2\right],
\end{equation}
where $\gamma=0.01$ and $d=0.02$ in the subsequent numerical experiment.
For the FTG prior, we should use a finite dimensional formula and assume the prior density is
\begin{equation}\label{FOTV density}
p(f_N)\propto\exp(-\lambda\|f_N\|_{HFTV}),
\end{equation}
and the finite dimensional approximation for the Hadamard fractional derivative in $\|f_N\|_{FTV}$ with the equations \eqref{priordiscret1} and \eqref{priordiscret2} for fractional order $0<\alpha<1$ and $1<\alpha<2$. Moreover, we fix $\beta = 0.03$ and extract $ 2 \times10 ^ 5 $ samples from all posterior measure in the iterative process of pCN algorithm.
\begin{figure}[ht]
\centering
\subfigure[]{
\label{fig1:subfig1}
\includegraphics[width=7.5cm]{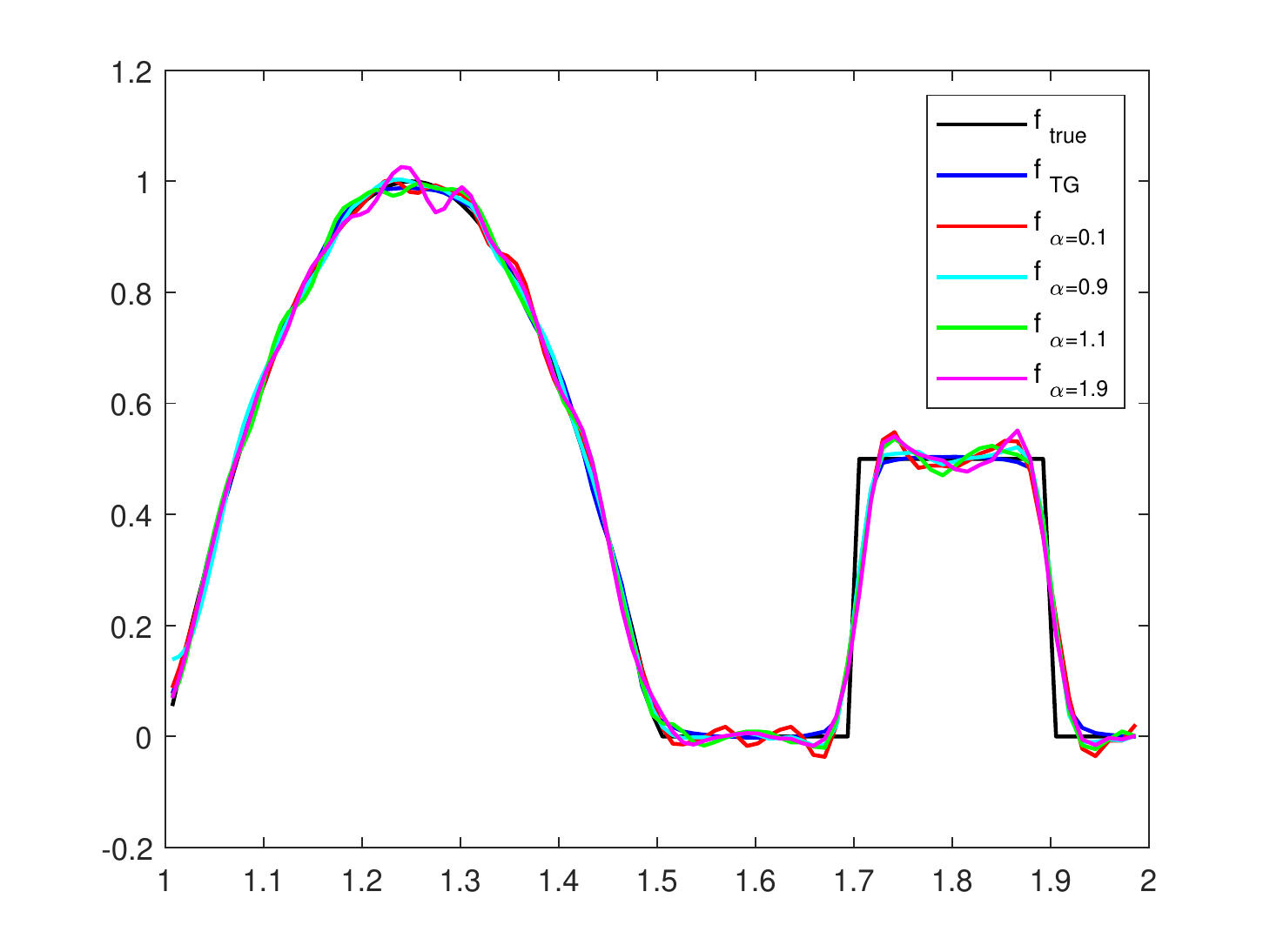}
}
\quad
\subfigure[]{
\label{fig1:subfig2}
\includegraphics[width=7.5cm]{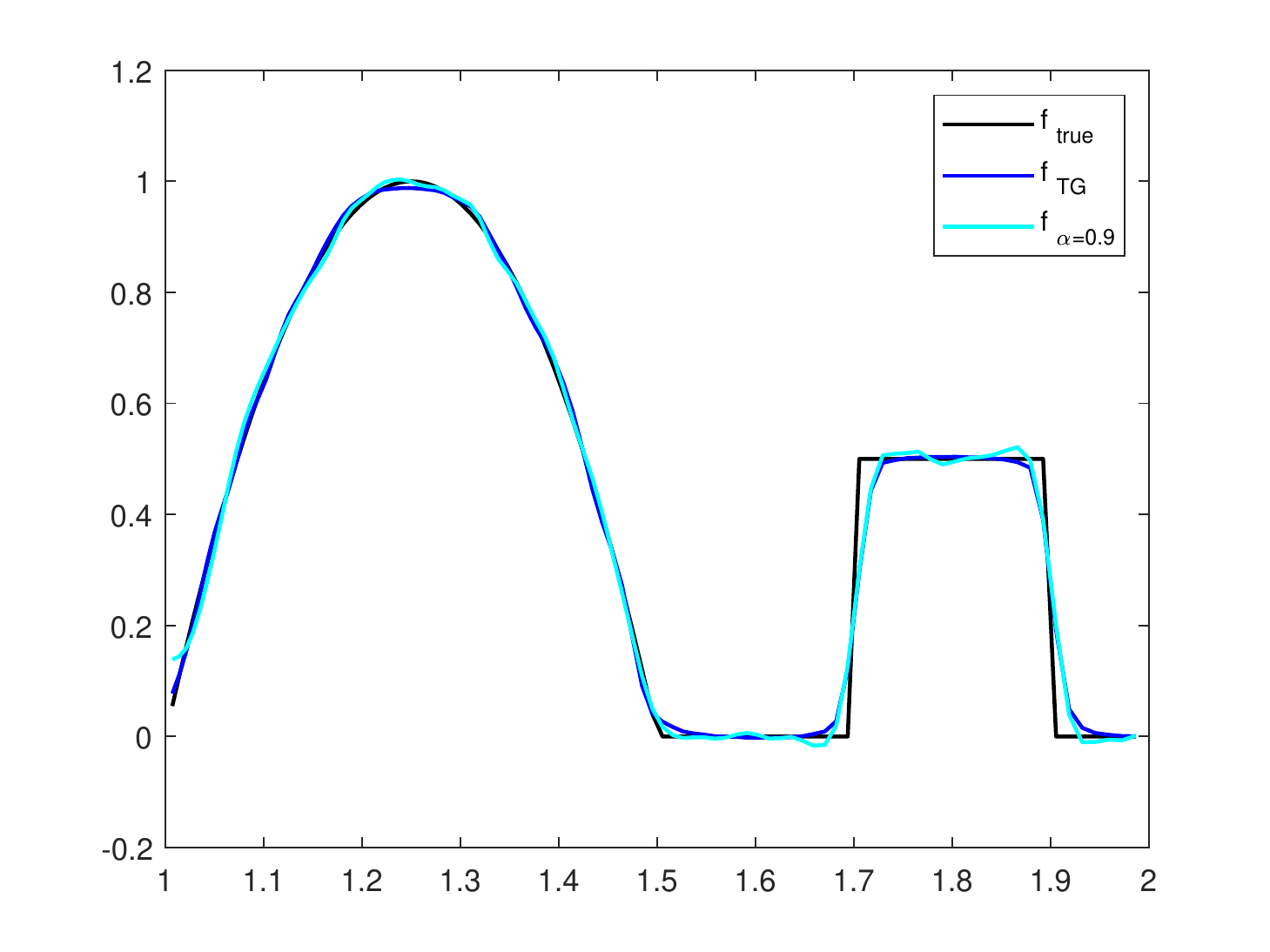}
}
\caption{(a): The true solution and inversion solution: The legend $f_{TG}$ represents TG prior inversion results, legend $f_{true}$ represents the true solution, and the others represents HFTG prior inversion results with different fractional oreder $\alpha$. (b): HFTG prior with $\alpha=0.9$ compared with TG prior.}
\label{figure1}
\end{figure}

The numerical results for reconstructing the source term $f$ with different fractional order $\alpha$ calculated by the HFTG prior and TG prior are shown in Figure \ref{figure1}. For the HFTG prior, when $\alpha=0.1,\ 0.9,\ 1.1$, and $1.9$, the corresponding regularization parameters are $\lambda=0.01,\ 2,\ 0.1$, and $0.0001$ respectively, and for the TG prior, we choose $\lambda=2$. The parameters in the Figure \ref{fig1:subfig2} are the same as in the Figure \ref{fig1:subfig1} with $\alpha=0.9$ and $f_{TG}$. We can see that the HFTG prior with various $\alpha$ and TG prior are well approximations of the exact solution, which also indicates that the HFTG and TG prior is valid in a deconvolution problem.

It is worth noting that, from the Figure \ref{fig1:subfig2}, the results of HFTG prior with $\alpha=0.9$ are basically the same as the results of TG prior, while the results of $\alpha=1.1$ is different. These results are consistent with the findings reported in \cite{Kilbas2006,Samko1993}, that is when $\alpha=n\in \mathbb{N}$, the classical left and right Riemann-Liouville fractional derivatives are consistent with integer $n$ order derivative $f^{(n)}(x)$ and $-f^{(n)}(x)$. Hence when $0<\alpha<1$ as $\alpha\to 1_{-}$ and $n=1$, the Riesz Riemann-Liouville fractional derivative is consistent with $f^{'}(x)$, while when $1<\alpha<2$ as $\alpha\to 1_{+}$ and $n=2$, will not so. There is the same reason of this phenomenon because of the relationship between Hadamard fractional derivative and classical Riemann-Liouville fractional derivative. Meanwhile, only when $\alpha=1$, the scheme \eqref{priordiscret1} degenerates into the central difference for $f^{'}(x)$. When $\alpha=0.1$ and $1.9$ are far from $1$, the results are more smooth compared with the result of TG prior.

From the Table \ref{tab1}, we can see that the errors for the three different $N$ look almost identical, suggesting that the results with the TG prior and FTG prior are independent of discretization dimensionality.
\renewcommand{\arraystretch}{1.5}
\begin{table}[htbp]
  \centering
  \caption{The errors of TG prior and $\alpha=0.9$ with various N.}
    \begin{tabular}[c]{ccc}
    \toprule
   {N}& {TG} &{$\alpha=0.9$} \\
    \midrule
    80 &0.0365&0.0383\\
    160&0.0364&0.0378\\
    320&0.0352&0.0373\\
    \bottomrule
    \end{tabular}
    \label{tab1}
\end{table}

Deconvolution problem is only a simple linear problem, in practice, the inverse problem should be more complex and ill-posed. Thus, we will use the HFTG prior to deal with more difficult problems in the next example.
%
%
\subsubsection{A Inverse source identification problem}\label{sec4.2.2}
In this example, we consider the source identification problem. Given the following initial-boundary value problem for the homogeneous heat equation
\begin{equation}\label{problem2}
\begin{split}
\begin{cases}
&\frac{\partial u(x,t)}{\partial t}=\Delta u(x,t)+f(x), \ (x,t)\in\Omega\times(0,T],\\
&u(x,t)=0, \ (x,t)\in\partial\Omega\times(0,T],\\
&u(x,0)=\varphi(x), \ x\in\Omega,\\
\end{cases}
\end{split}
\end{equation}
The corresponding inverse problem is to determine the heat source $f$ from the final temperature measurement $u(x,T)|_{x\in\Omega}$ with $\Omega=[1,3]$ and $T=1$. The initial temperature is given by
$$u(x,0)=\sin(\pi x)=\varphi(x), \ \ x\in[1,3],$$
and the heat source defined by
$$ f(x)=\left\{
\begin{array}{ll}
5,  &{1.15\leqslant x\leqslant1.35;}\\
5[\sin (6\pi x+\frac{\pi}{2})+1],  &{1.5\leqslant x \leqslant 2.5;}\\
5,  &{2.65\leqslant x\leqslant2.85;}\\
0,  &{otherwise.}
\end{array} \right. $$

We first solve the direct problem through the finite difference method (FDM) in \cite{L.Yan2010}. In keeping with the discrete schemes in section \ref{sec3.3}, assume $x=\exp(s)$ with $x\in[a,b]$, then $s\in [\ln(a),\ln(b)]$. The problem \eqref{problem2} can rewrite as
\begin{equation}\label{rewrite1}
\begin{split}
\begin{cases}
&\frac{\partial u(\exp(s),t)}{\partial t}=\exp(-2s)\left[\Delta u(\exp(s),t)-\frac{\partial u(\exp(s),t)}{\partial s}\right]\\
&\quad \quad\quad\quad\quad\quad\quad
  +f(\exp(s)), \  (\exp(s),t)\in\Omega\times(0,T],\\
&u(\exp(s),0)=\varphi(\exp(s)), \ \exp(s)\in\Omega,\\
&u(\exp(s),t)=0, \ (\exp(s),t)\in\partial\Omega\times(0,T],\\
\end{cases}
\end{split}
\end{equation}
Applying the same ideas discretize the problem \eqref{rewrite1} on a uniform grid in $s$ with the Crank-Nicolson method and the notations in \cite{L.Yan2010},
\begin{align*}
\frac{u(\exp(s),t+\Delta t)-u(\exp(s),t)}{\Delta t}=&\exp(-2s)\left[\theta_0 (\Delta u|_{t+\Delta t})+(1-\theta_0)(\Delta u|_t)\right]\\
&-\exp(-2s)\left[\theta_1 (D u|_{t+\Delta t})+(1-\theta_1)(D u|_t))\right]+f,
\end{align*}
where $0\leqslant\theta_0, \theta_1\leqslant1$, $\Delta t$ is the equally stepsize of time.
$\Delta u$ and $Du$ are both discretized by second order cental difference scheme.

Then the inverse problem of \eqref{rewrite1} has been reduced to solving the following linear system
\begin{equation}\label{probleminverse}
Af=b.
\end{equation}
The observed data $y$ are subject to noise, thus we have
$$y=Af+\eta,$$
where $\eta$ is the Gaussian observed noise with $\eta\thicksim \mathcal{N}(0, 0.001^2)$.

\begin{figure}[ht]
\centering
\subfigure[]{
\label{fig2:subfig1}
\includegraphics[width=7.5cm]{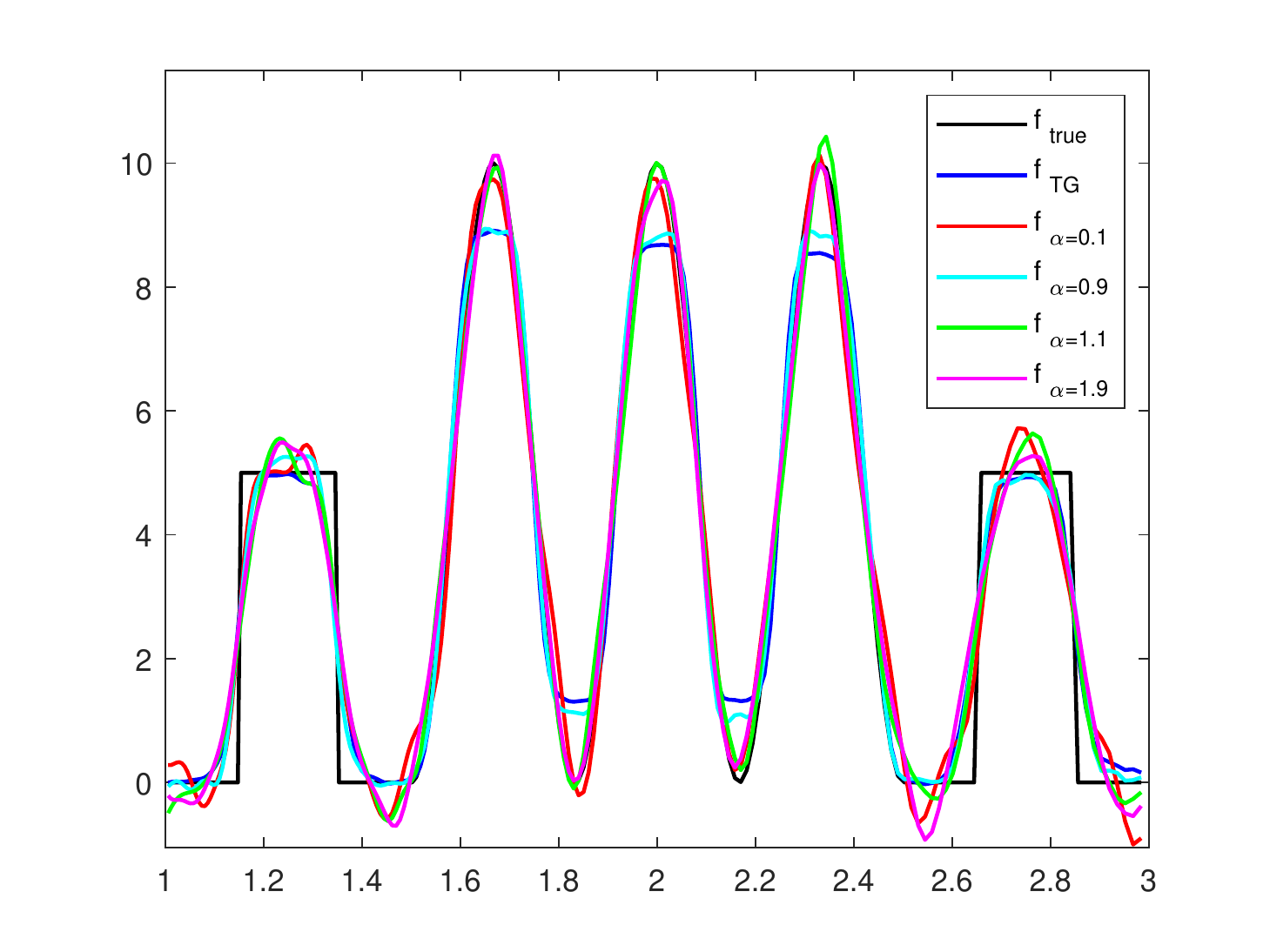}
}
\quad
\subfigure[]{
\label{fig2:subfig2}
\includegraphics[width=7.5cm]{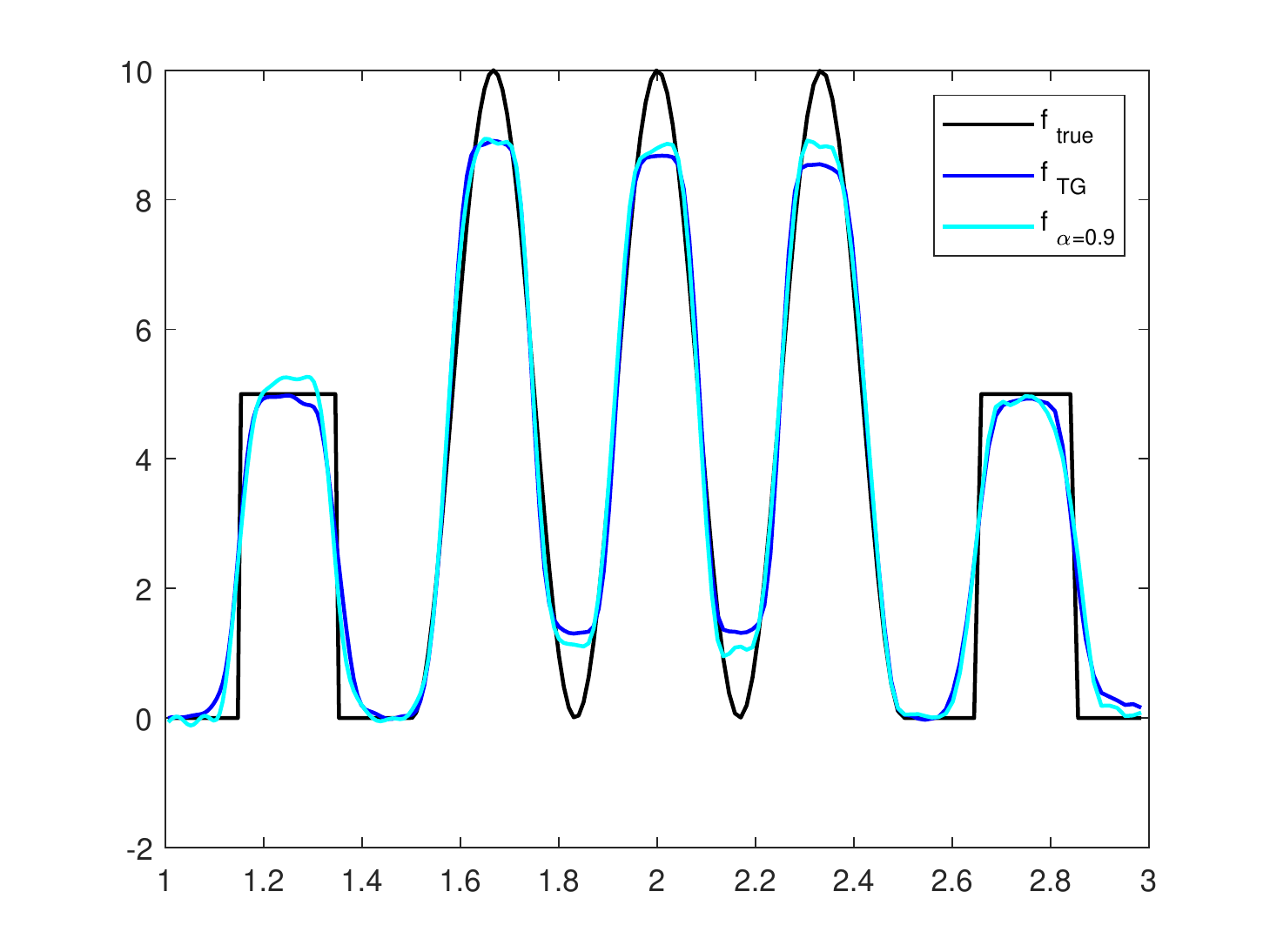}
}
\caption{(a): The true solution and inversion solution: The legend $f_{TG}$ represents TG prior inversion results, legend $f_{true}$ represents the true solution, and the others represents HFTG prior inversion results with different fractional oreder $\alpha$. (b): HFTG prior with $\alpha=0.9$ compared with TG prior.}
\label{figure2}
\end{figure}

We take $\theta=\theta_{0}=\theta_1=\frac{1}{2}$ and the number of uniform grids discrete in space and time is $ M = 200 $ and $ N = 120 $, respectively. For the HFTG prior, the discretization of the Hadamard fractional derivative is the same as equation \eqref{priordiscret1} and \eqref{priordiscret2} with $0<\alpha<1$ and $1<\alpha<2$. For the reference Gaussian prior measure, the covariance is same as equation \eqref{cov} with $\gamma=0.5$ and $d=0.03$. In order to ensure the reliability of the inference, we draw $10^6$ samples from the posterior measure and $2.5\times 10^5$ samples are used in the burn-in period with $\beta=0.02$ for pCN algorithm.

The numerical results for reconstructing the heat source $f$ with different fractional order $\alpha$ calculated by the HFTG prior and TG prior are shown in Figure \ref{figure2}. For the HFTG prior, when $\alpha=0.1,\ 0.9,\ 1.1$, and $1.9$, the corresponding regularization parameters are $\lambda=0.001,\ 0.06,\ 0.008$, and $0.0002$ respectively, and for the TG prior, we choose $\lambda=0.08$. The parameters in the Figure \ref{fig2:subfig2} are the same as in the Figure \ref{fig2:subfig1} with $\alpha=0.9$ and $f_{TG}$.

From the Figure \ref{figure2}, we can see that the advantages of different priors are more clear. For the TG prior, the results suffers from the staircase artifact in smooth due to the fact that the TV is local operator, but well approximate the flat. Nevertheless, the reconstruction with HFTG priors can overcome the weakness of TG prior because of that the HFTV is a non-local operator, but have blurry effect on the edges since it is less sensitive to edge than TV. For $\alpha = 0.9$, the result of HFTG prior is basic consistent with that of TG prior and the others are smoother than TG prior, which also similar to deconvolution problem \ref{sec4.2.1}.


This example is a linear problem but with a complex reconstruction truth, so that the inverse results are slightly worse than the first example. Also, we can see that the results of all methods for different priors are agree better with the true solution, which shows that all the priors of Bayesian inference methods are behaved well. The next we will consider a nonlinear inverse problem, which should be more ill-posed, to further appraise the behaviour of the HFTG prior.

%
%
\subsubsection{The parameter identify by interior measurement problem}\label{sec4.2.3}
In this example, we consider the nonlinear problem of identifying the parameter $q$ in the Dirichlet boundary value problem as following
\begin{equation}\label{problem3}
\left\{
\begin{array}{ll}
-\Delta u+qu=f,  &{in\ \Omega,}\\
u=0,  &{on\ \partial \Omega.}\\
\end{array} \right.
\end{equation}
The backward problem: Giving the source $f$, recover the coefficient $q$ from the measurements of the interior Neumann value $g=\frac{\partial u}{\partial n} |_{\Omega \backslash \partial \Omega}$. Similar to \cite{Gu2021}, we can define a nonlinear forward operator $G$ with $G(q)=g$. In this numerical example, we take $\Omega=[1,3]$, and the source $f$ is given by
$$f(x)=q(x)(x-1)(x-3)-2,$$
and the true solution $q$ of the inverse problem is a piecewise smooth function, defined as following
\begin{equation*}
q(x)=\left\{
\begin{array}{ll}
0.8,  &{1.3\leqslant x<1.6,}\\
1.4,  &{1.6\leqslant x<1.8,}\\
13(x-1.8)(x-2.2)+1.4,  &{1.8\leqslant x<2.2,}\\
1.4,  &{2.2\leqslant x<2.4,}\\
0.8,  &{2.4\leqslant x<2.7,}\\
0,  &{otherwise.}
\end{array} \right.
\end{equation*}
Applying the same way to section \ref{sec4.2.2}, and do the transformation $x=\ln(s)$ for problem \eqref{problem3} and then apply the equidistance discretization in variable $s$ with $N=200$ and the second order centered difference scheme to the equation after relevant transformation of problem \eqref{problem3}. Thus, the forward problem can be solved by the FDM and the observed data $y$ will be generated by the synthetic exact data $G(q)$ added the observed Gaussian noise $\eta$, i.e.,
$$y=G(q)+\eta.$$
\begin{figure}[ht]
\centering
\subfigure[]{
\label{fig3:subfig1}
\includegraphics[width=7.5cm]{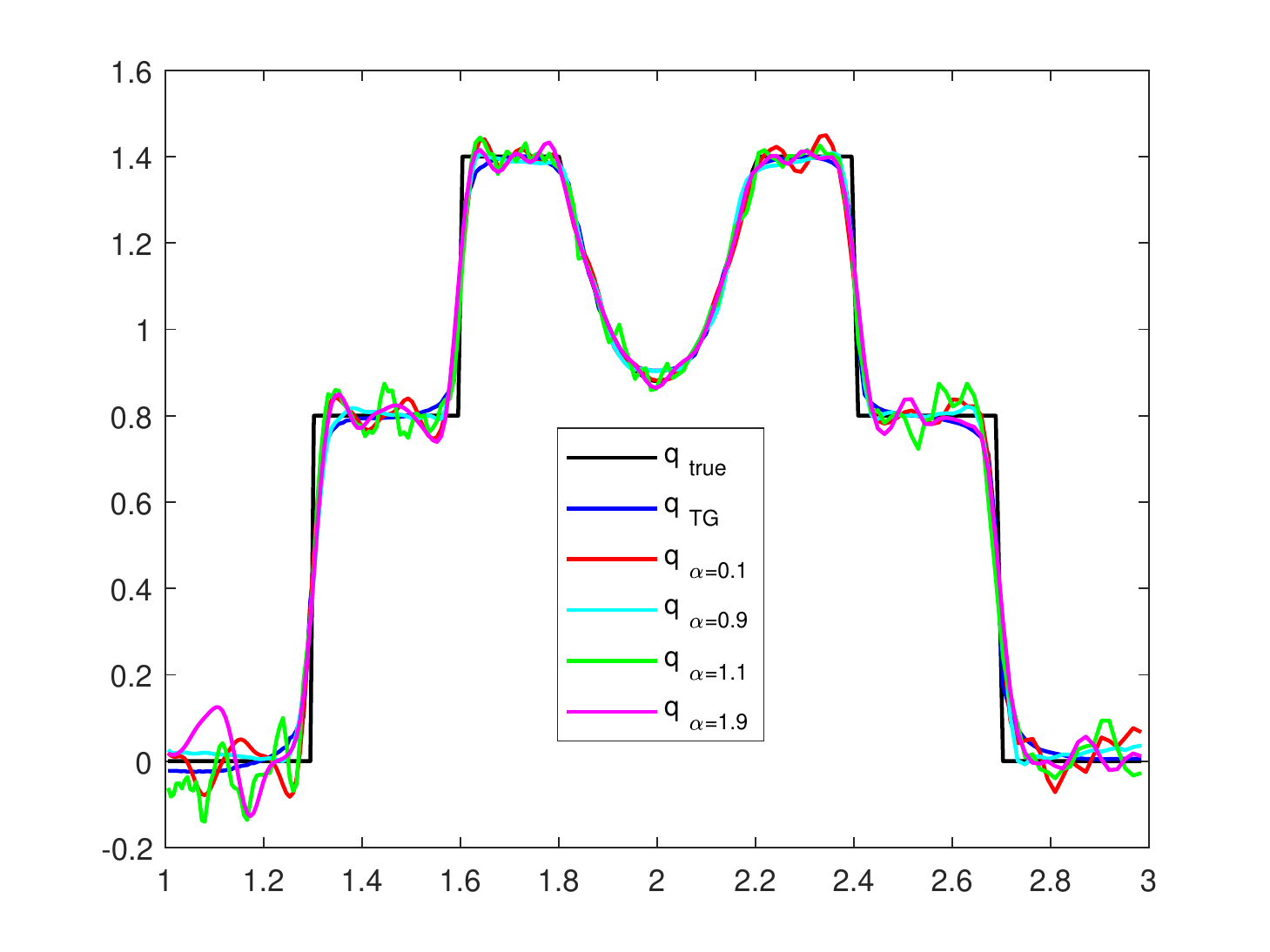}
}
\quad
\subfigure[]{
\label{fig3:subfig2}
\includegraphics[width=7.5cm]{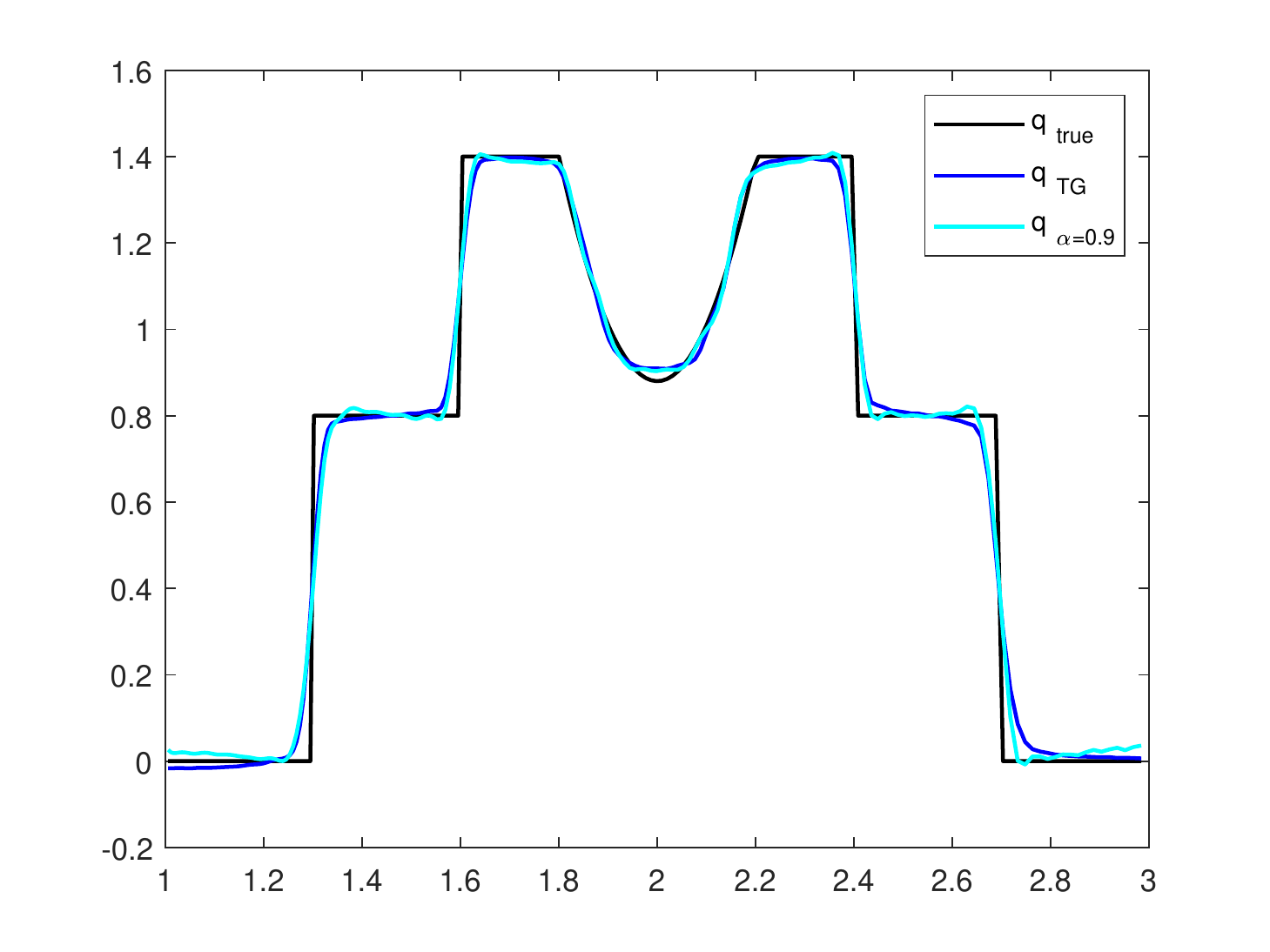}
}
\caption{(a): The true solution and inversion solution: The legend $f_{TG}$ represents TG prior inversion results, legend $f_{true}$ represents the true solution, and the others represents HFTG prior inversion results with different fractional oreder $\alpha$. (b): HFTG prior with $\alpha=0.9$ compared with TG prior.}
\label{figure3}
\end{figure}
For the HFTG prior, the discretization of the Hadamard fractional derivative is the same as equation \eqref{priordiscret1} and \eqref{priordiscret2} with $0<\alpha<1$ and $1<\alpha<2$. For the reference Gaussian prior measure, the covariance is same as equation \eqref{cov} with $\gamma=0.01$ and $d=0.03$.

In this numerical simulations, we take the noise as $\eta\thicksim \mathcal{N}(0,0.001^2)$. For the HFTG prior, the discretization of the Hadamard fractional derivative is the same as equation \eqref{priordiscret1} and \eqref{priordiscret2} with $0<\alpha<1$ and $1<\alpha<2$, and for the Gaussian measure, the covariance is again given by equation \eqref{cov} with $\gamma=1$ and $d=0.04$. We choose to draw $10^5$ samples from the posterior with pCN algorithm and set $\beta=0.01$ in Algorithm \ref{algorithm}.

The numerical results for reconstructing the coefficient $q$ with different fractional order $\alpha$ calculated by the HFTG prior and TG prior are shown in Figure \ref{figure3}. For the HFTG prior, when $\alpha=0.1,\ 0.9,\ 1.1$, and $1.9$, the corresponding regularization parameters are $\lambda=0.08,\ 2,\ 0.05$, and $0.001$ respectively, and for the TG prior, we choose $\lambda=2$. The parameters in the Figure \ref{fig3:subfig2} are the same as in the Figure \ref{fig3:subfig1} with $\alpha=0.9$ and $f_{TG}$.

The Figure \ref{figure3} shows that the HFTG prior is behaved well in smooth piece and a little worst in discontinuous pieces compared with TG prior. However, in Figure \ref{fig3:subfig2}, when $\alpha=0.9$, the results of HFTG prior and TG prior are roughly the same. This features are showing no difference with the previous instances. In addition, we can see that the results of all the different prior can approximate the true function, indicating the posterior distributions derived by all the prior are well behaved. Thus, this suggests that the HFTG prior is feasible and reasonable.

%
%
\section{Inclusion}\label{sec5}
Throughout this paper, we investigate an infinite-dimensional Bayesian inference method based on a Hadamard fractional total variation-Gaussian (HFTG) prior. We first specify the fractional Sobolev space $W^{\alpha}_2$ as the space of unknown functions $X$, and the Hadamard fractional total variation as the additional regularization term, which construct the HFTG prior. Then the well-posedness and finite-dimensional approximation of the posterior measure of the Bayesian inversion with this prior has been obtained under the particular assumption for the forward operator and the property of fractional total variation. Finally, we use the pCN algorithm to implement the sampling from the posterior distribution in the Bayesian inference with respect to the HFTG prior. The numerical results show that the HFTG prior is effective and behaved well on avoiding step effects and capturing the detailed texture of the image. We believe that the HFTG prior can be used to many other inverse problems, such as scattering inverse problem and so on, which is our research interest in future.

%
%
\section{Acknowledgements}\label{sec6}
The work described in this paper was supported by the NSF of China (11301168) and NSF of
Hunan (2020JJ4166).

\section{Appendix}

\textbf{Proof of Theorem \ref{approximationtheorem}:}
\begin{proof}
Set $X=W_2^{\alpha}(\Omega)$, for $\forall$ $r>0$, $\exists$ $K_1=K_1(r)>0$, $K_2=K_2(r)>0$ such that, for all $u\in X$ with $\Vert u\Vert_X<r$, and satisfies:
$\Phi(u)\leq K_1$, $R(u)\leq K_2$. Throughout the proof, the constant C changes from occurrence to occurrence.
Let $r=\|y\|_\Sigma$ and $K(r)=K_1(r)+K_2(r)$, then it is easy to see that the normalization constant $Z$ for $\mu^y$ satisfies,
\[
Z\geqslant\int_{\lbrace\Vert u\Vert_X<r\rbrace}\exp(-K(r))\mu_0(\mathrm{d}u)=\exp(-K(r))\mu_0\lbrace\Vert u\Vert_X<r\rbrace=C.
\]
Similarly, one can show the normalization constant for $\mu^y_{N_1,N_2}$ also satisfies $Z_{N_1,N_2}\geq C$.
Moreover, for $\forall\ \varepsilon\in(0,1)$,
\begin{align*}
\vert Z-Z_{N_1,N_2}\vert &\leqslant \int_X\vert\exp(-\Phi(u)-R(u))-\exp(-\Phi_{N_1}(u)-R_{N_2}(u))\vert\mathrm{d}\mu_0(u)\\
&\leqslant\int_{X\backslash X_\varepsilon}\mu_0(\mathrm{d}u)+\int_{X_\varepsilon}\vert\Phi(u)-\Phi_{N_1}(u)\vert\mu_0(\mathrm{d}u)+\int_{X_\varepsilon}\vert R(u)-R_{N_2}(u)\vert\mu_0(\mathrm{d}u)\\
&\leqslant\varepsilon+a_{N_1}(\varepsilon)+b_{N_2}(\varepsilon).
\end{align*}

By the definition of Hellinger distance, it finds
\begin{equation}
\begin{aligned}
2d_\mathrm{Hell}(\mu^y,\mu^y_{N_1,N_2})^2 &= \int_X \left(\sqrt{\frac{d\mu^y}{d\mu_0}}-\sqrt{\frac{d\mu^y_{N_1,N_2}}{d\mu_0}}\right)^2 \mu_0(du)
\\&=\int_X(Z^{-\frac12}\exp(-\frac12\Phi(u)-\frac12R(u))-Z_{N_1,N_2}^{-\frac12}\exp(-\frac12\Phi_{N_1}(u)-\frac12R_{N_2}(u)))^2\mu_0(\mathrm{d}u)
\\ &\leqslant I_1+I_2+I_3,
\end{aligned}
\end{equation}

where

\begin{equation*}
\begin{aligned}
&I_1=\int_{X\backslash X_\varepsilon}(\frac1{\sqrt{Z}}\exp(-\frac12\Phi(u)-\frac12R(u))-\frac1{\sqrt{Z_{N_1,N_2}}}\exp(-\frac12\Phi_{N_1}(u)-\frac12R_{N_2}(u)))^2\mu_0(\mathrm{d}u),\\
&I_2=\frac2Z\int_{X_\varepsilon}(\exp(-\frac12\Phi(u)-\frac12R(u))-\exp(-\frac12\Phi_{N_1}(u)-\frac12R_{N_2}(u)))^2\mu_0(\mathrm{d}u),\\
&I_3=2\vert Z^{-\frac12}-(Z_{N_1,N_2})^{-\frac12}\vert^2\int_{X_\varepsilon}\exp(-\Phi_{N_1}(u)-R_{N_2}(u))\mu_0(\mathrm{d}u).
\end{aligned}
\end{equation*}

Actually, we can show
\begin{equation*}
\begin{aligned}
&I_1\leqslant\int_{X\backslash X_\varepsilon}(2C^{-\frac12})^2\mu_0(\mathrm{d}u)\leq C\varepsilon,\\
&I_2\leqslant\frac2C\int_{X_\varepsilon}(a_{N_1}(\varepsilon)+b_{N_2}(\varepsilon))^2\mu_0(\mathrm{d}u)\leq C(a_{N_1}(\varepsilon)+b_{N_2}(\varepsilon))^2,\\
&I_3\leqslant C(Z^{-3}\wedge(Z_{N_1,N_2})^{-3})\vert Z-Z_{N_1,N_2}\vert^2\int_{X_\varepsilon}\mu_0(\mathrm{d}u)=C(\varepsilon+a_{N_1}(\varepsilon)+b_{N_2}(\varepsilon))^2.
\end{aligned}
\end{equation*}

It implies that
\[
2d^2_\mathrm{Hell}(\mu^y,\mu^y_{N_1,N_2})\leq C(\varepsilon+\varepsilon^2+(a_{N_1}(\varepsilon)+b_{N_2}(\varepsilon))^2+\varepsilon(a_{N_1}(\varepsilon)+b_{N_2}(\varepsilon))),
\]
here $C$ is a constant independent of $N_1,N_2$. Let $N_1$, $N_2\rightarrow+\infty$, we have that
\[
\lim_{N_1,N_2\to+\infty}2d_\mathrm{Hell}(\mu,\mu_{N_1,N_2})^2\leq C(\varepsilon+\varepsilon^2),
\]
for any $\varepsilon>0$. Hence,
\[
\lim_{N_1,N_2\to+\infty}d_\mathrm{Hell}(\mu^y,\mu^y_{N_1,N_2})=0.
\]
\end{proof}

\textbf{Proof of Corollary \ref{approximationcor}:}
\begin{proof}
 Set $X=W^{\alpha,\psi}_{2}(\Omega)$ and
\[
a_N=\mathbb{E}\| u-u_N\|_X^2= \sum_{k=N+1}^\infty \mathbb{E}|\langle u,e_k\rangle|^2.
\]
Notice $\mathcal{C}_0$ is in the trace class, then $a_N\to 0$ as $N\to\infty.$
By using Markov's inequality, it finds, for $\forall\ \varepsilon>0$ and $\forall\ N\in \mathbb{N}$,
\begin{equation}
\mu_0(\lbrace\Vert u-u_N\Vert_X>\sqrt{\frac{2a_N}{\epsilon}}\rbrace)\leq \frac12\varepsilon. \label{e:eq1}
\end{equation}

For above $\varepsilon$, $\exists\ r_\varepsilon$ such that $\mu_0(\{u\in X\, |\, \Vert u\Vert_X>r_\varepsilon\})<\frac12\varepsilon.$
Clearly, for $\forall\ N\in\mathbb{N}$,
\[
\mu_0(\lbrace u\in X\, |\, \Vert u\Vert_X\leq r_\varepsilon,\,\Vert u-u_N\Vert_X\leq\sqrt{\frac{2a_N}{\varepsilon}}\rbrace)\geq 1-\varepsilon.
\]
Setting $\widetilde{X}=\lbrace u\in X\, |\, \Vert u\Vert_X\leq r_\varepsilon,\,\Vert u-u_N\Vert_X\leq\sqrt{\frac{2a_N}{\varepsilon}}\rbrace.$

Since $G$ satisfies the Assumptions \ref{forward operator assume}, the $\Phi$ satisfies Assumptions 2.6 in \cite{Stuart2010} and $R$ defines by Equation \eqref{R definition}, it follows that there are constants $L^\Phi_\varepsilon, L^R_\varepsilon>0$, such that for $\forall\ u\in \widetilde{X}$,
\begin{gather*}
\vert\Phi(u)-\Phi(u_N)\vert\leq L^\Phi_\varepsilon\Vert u-u_N\Vert_X\leq L^\Phi_\varepsilon\sqrt{\frac{2a_N}{\varepsilon}},\\
\vert R(u)-R(u_N)\vert\leq L^R_\varepsilon\Vert u-u_N\Vert_X\leq L^R_\varepsilon\sqrt{\frac{2a_N}{\varepsilon}}.
\end{gather*}
it is easy to see $L^\Phi_\varepsilon\sqrt{\frac{2a_N}{\varepsilon}}, L^R_\varepsilon\sqrt{\frac{2a_N}{\varepsilon}}\to0$ as $N\to\infty$.
As,
 \[\widetilde{X}\subset X_\varepsilon=\{u\in X\, |\,\vert\Phi(u)-\Phi(u_N)\vert\leq L^\Phi_\varepsilon\sqrt{\frac{2a_N}{\varepsilon}}, \,
\vert R(u)-R(u_N)\vert\leq L^R_\varepsilon\sqrt{\frac{2a_N}{\varepsilon}}\},\]
it deduces $\mu_0(X_\varepsilon)\geq 1-\varepsilon$,
and by using Theorem \ref{approximationtheorem},
\[
d_\mathrm{Hell}(\mu^y,\mu^y_N)\to0,~~\mathrm{as}~~N\to\infty.
\]
\end{proof}

\end{document}